\newtheorem{theorem}{Theorem}[section]
\newtheorem{lem}[theorem]{Lemma}
\newtheorem{cor}[theorem]{Corollary}
\theoremstyle{definition}
\newtheorem{no}[theorem]{Notation}
\newtheorem{re}[theorem]{Remark}
\newcommand{\Div}[0]{\ensuremath{\textup{div}}}
\newcommand{\dom}[0]{\ensuremath{\textup{Dom}}}
\newcommand{\id}[0]{\ensuremath{\textup{Id}}}
\newcommand{\image}[0]{\ensuremath{\textup{Image}}}
\newcommand{\Ker}[0]{\ensuremath{\textup{Ker}}}
\newcommand{\sgn}[0]{\ensuremath{\textup{sgn}}}
\newcommand{\paral}{/\kern-0.3em/}
\def\parals_#1{/\kern-0.3em/_{\!#1}}
\def\paralss_#1^#2{/\kern-0.3em/_{\!#1}^{\!#2}}
\begin{document}
\begin{center}
\large{\bf{Generalised Clark-Ocone formulae for differential forms}}
\par
By \normalsize YANG Yuxin
\par
\today
\end{center}
\textbf{Abstract:}
We generalise the Clark-Ocone formula for functions to give analogous  representations for differential forms on the classical Wiener space.  Such formulae provide explicit expressions for closed and co-closed differential forms and, as a by-product, a new proof of the triviality of the $L^2$ de Rham cohomology groups on the Wiener space, alternative to Shigekawa's approach \cite{shigekawa1986rham} and the chaos-theoretic version \cite{yang2011ito}.  This new approach has the potential of carrying over to curved path spaces, as indicated by the vanishing result for harmonic one-forms in \cite{elworthy2011vanishing}.  For the flat path group, the generalised Clark-Ocone formulae can be derived using the It\^o map.\\
\textbf{Keywords:} 
Clark-Ocone formula, Hodge decomposition, $L^2$ cohomology, martingale representation, Malliavin calculus, Wiener space, path group\\
\textbf{MSC2010:} 
58A14, 58A12, 58B05, 60H30\\
\textbf{Acknowledgements:} 
The author is grateful to Professor David Elworthy for suggesting the problem and for numerous valuable discussions.  
This research was partially funded by an Early Career Fellowship from the Warwick Institute of Advanced Study.
\section{Introduction}
\label{sec:intro}
The martingale representation theorem expresses any square integrable function on the classical Wiener space as the sum of its expectation and an It\^o integral.  If, in addition, the function is $H$-differentiable, with $H$ being the Cameron-Martin space,  the integrand of the It\^o integral can be expressed as the conditional expectation of the $H$-derivative.  The resultant representation is called the Clark-Ocone formula \cite{clark1970representation, ocone1984malliavin}.
\par
As one of the basic tools in stochastic analysis, the Clark-Ocone representation has many important applications and generalisations.  One of its crucial consequences is the spectral gap inequality obtained by S. Fang \cite{fang1994inegalite} for Riemannian path spaces; see \cite{capitaine1997martingale} for the derivation of other related functional inequalities, including the logarithmic Sobolev inequality first proved by Gross \cite{gross1975logarithmic, gross1991logarithmic} using different methods. 
We are interested in extending these results to the study of 
the Hodge-Kodaira Laplacian on differential forms, 
and we do so by generalising the Clark-Ocone formula for functions (zero-order forms)  to analogous representations for higher-order forms.  Such representations give explicit formulae for closed and co-closed forms; as a result, they offer a new proof of the vanishing of the $L^2$ de Rham cohomology groups on the classical Wiener space, alternative to the proof first given by I. Shigekawa \cite{shigekawa1986rham} and the chaos-theoretic version in \cite{yang2011ito}. 
\par
Shigekawa's result applies more generally to abstract Wiener spaces, which include the classical Wiener space as a special example.  It was the first step in developing a Hodge theory on infinite dimensional manifolds, a goal set by L. Gross \cite{gross1967potential} in his pioneering work on infinite dimensional potential theory. 
Shigekawa's definitive treatment of the linear case provides guidance for the study of nonlinear cases. For example, Fang and Franchi \cite{fang1997differentiable} used the It\^o map to transfer the vanishing result from the classical 
Wiener space to the path spaces over a compact Lie group with a bi-invariant metric. 
However, the problem of developing a Hodge theory in more general infinite-dimensional manifolds with curvature remains open.  
Our approach depends crucially on the filtration structure inherent in the classical Wiener space, hence does not extend to a general abstract Wiener space; on the other hand, it shows promise of carrying over to curved Riemannian path spaces as evidenced by our proof of the vanishing of the first $L^2\!$ cohomology there \cite{elworthy2011vanishing}. 
\par
The organisation of this article is as follows. 
In Section \ref{sec:notation} we review the basic notation and relate the Clark-Ocone formula to Shigekawa's result for zero-forms. 
In Section \ref{sec:CO-q} we present the generalised formulae for differential forms, with explicit expressions for closed and co-closed forms.  
Shigekawa's vanishing result for higher-order de Rham cohomology groups on the classical Wiener space can be seen as one of the direct consequences of these generalised Clark-Ocone formulae.  
The proofs of the main theorems are given in Section \ref{sec:proof}, and Section \ref{sec:extension} contains the extension of the generalised Clark-Ocone formulae to the path group via the It\^o map, which was shown to be a differentiable isomorphism by Fang and Franchi \cite{fang1997differentiable}.
\section{Notation and Motivation}
\label{sec:notation}
Fixing $T>0$, we denote by $C_0=C_0([0,T]; \mathbb{R}^m)$ the classical Wiener space, which is the collection of continuous functions from $[0,T]$ to $\mathbb{R}^m$ starting at the origin.   
This is a separable Banach space with the uniform norm 
\[
\|\sigma\|_{C_{0}} = \sup_{t \in [0,T]} \| \sigma (t) \|_{\mathbb{R}^{m}}. 
\]
Let $\gamma$ be the classical Wiener measure on $C_0$, and 
$H= L^{2,1}_0([0,T]; \mathbb{R}^m)$ the Cameron-Martin Hilbert space, which is the subspace of finite energy paths equipped with the inner product
\[
\langle h_{1}, h_{2} \rangle_{H} = \int_0^T \langle \frac{d}{dt}h_1, \frac{d}{dt}h_2  \rangle_{\mathbb{R}^{m}} dt, \quad h_1, h_2 \in H.
\]
The triple $(C_0, H, \gamma)$ gives the primary example of an abstract Wiener space, first studied by Gross \cite{gross1965abstract}.
\par
From the works of Gross \cite{gross1965abstract, gross1967potential}, it becomes clear that $H$-directional derivatives are the more natural objects to study than the usual Fr\'echet-derivatives.  Correspondingly we consider $H$-differential-forms, i.e., sections of dual bundle of exterior powers of $H$. Since we are primarily interested in the $L^2$ theory, we concentrate on $L^2$ and $\mathbb{D}^{2,k}$ forms, denoted by $L^2\Gamma(\wedge^q H)^*$ and $\mathbb{D}^{2,k}\Gamma(\wedge^q H)^*$, respectively, where $q\in \mathbb{N}$ is the order of the differential forms (while functions are considered zero-forms).
Throughout, we denote by $\otimes^q H$ the standard Hilbert completion of algebraic tensor products of $H$, by $\wedge^q H$ the $q$-fold skew-symmetric tensor products  completed using the Hilbert space cross norm inherited from $\otimes^q H$, and 
by $\mathbb{D}^{p,k}$ the Sobolev spaces defined using the $H$-derivatives and Hilbert-Schmidt norms, with $\mathbb{D}^{2,0}=L^2$. 
\par
The Clark-Ocone formula states that any function $F\in\mathbb{D}^{2,1}$ can be expressed as
\begin{equation}
\label{eq:CO_0}
F=\mathbb{E}F+\int_0^T\langle \mathbb{E}[\frac{d}{dt}(\nabla F)_t|\mathcal{F}_t],dB_t\rangle_{\mathbb{R}^m},
\end{equation}
where $\nabla F$ is the $H$-gradient obtained from the $H$-derivative $DF: C_0\rightarrow H^*$ via 
\begin{equation}
\label{eq:grad}
\langle \nabla F(\sigma), h\rangle_H = DF(\sigma)(h),\quad \sigma\in C_0, h\in H,
\end{equation}
and the filtration $\{\mathcal{F}_t\}_{t\in [0,T]}$ is generated by the Brownian motion $B$ on $\mathbb{R}^m$, with $B_t(\sigma)=\sigma(t)$ for $\sigma\in C_0$ and $t\in [0,T]$. We assume that all the sigma-algebras are completed with respect to $\gamma$.  
\par
Wu \cite{wu1990traitement} observed that the formula (\ref{eq:CO_0}) can be taken to hold for general $L^2$ functions, as the projection map from the space of $L^2$ processes onto its closed subspace of adapted processes has a special smoothing property.  Denote the subspace of all $L^2$ processes adapted to $\{\mathcal{F}_t\}_{t\in [0,T]}$ by $V$, and the projection onto it by $P_V$. 
Let $\delta$ be the adjoint of the gradient operator, which is the Skorohod integral and coincides with the It\^o integral on adapted processes.  
Wu \cite{wu1990traitement} reformulated the Clark-Ocone formula as 
\begin{equation}
\label{eq:CO_0_Wu}
F=\mathbb{E}F+\delta P_V\nabla F,
\end{equation}
while allowing for a more abstract and general interpretation of $V$.  
He also proved that $\delta (P_V\nabla) = P_{\delta(V)}$, and applied the It\^o isometry to show
 \[
 |(P_V\nabla)F|_{L^2(C_0;H)} = |\delta (P_V\nabla)F |_{L^2(C_0;\mathbb{R}^m)} 
 =  |P_{\delta (V)} F|_{L^2(C_0;\mathbb{R}^m)} \le |F|_{L^2(C_0;\mathbb{R}^m)}.
 \]
This means that the composed operator $(P_V\nabla)$ extends to a linear operator on $L^2$. 
We will make use of this observation in the sequel. 
\par
The representation (\ref{eq:CO_0}) shows that the operator $\nabla$ has a closed range and that the Laplacian $\Delta=\delta\nabla$ on the classical Wiener space has a spectral gap, based on a result by Donnelly \cite{donnelly1981differential}. 
We also observe that, since $\delta$ can be regarded as a (negative) divergence operator, the Clark-Ocone formula gives a novel solution to the equation $\Div(V)=F$, by expressing the unknown vector field $V$ in terms of the given function $F$.  In addition, it shows that 
\begin{description}
\item[$\mathbf{1}$.]  
$\nabla F=0 \iff F=\textup{constant} (=\mathbb{E}F)$; and 
\item[$\mathbf{2}$.] 
$\mathbb{E}F=0 \iff F\in \image(\delta)$.
\end{description}
These are precisely the result of Shigekawa \cite{shigekawa1986rham} for $L^2$ functions considered as zero-forms, 
i.e.,  the only harmonic zero-forms are constant functionals. 
In fact, the formula (\ref{eq:CO_0}) gives an explicit Hodge decomposition for zero-forms in the form of 
\[
F = \textup{constant} + \delta(v),
\]
and provides the expressions for $v$ and the constant in terms of $F$.  
\par
This motivated our attempt to find similar expressions which would imply the corresponding result for higher-order forms on the classical Wiener space.  
Recall the definitions of exterior derivative $d_q$ and its adjoint $d_q^*$ in Shigekawa \cite{shigekawa1986rham}:
\begin{equation}
\label{eq:dq}
d_q=(q+1)A_{q+1}D, 
\quad \mbox{ and }\quad
d_{q}^*=D^*.
\end{equation} 
Here $D^*$ is the adjoint of $D$, and $A_q: L(\otimes^q H; \mathbb{R})\rightarrow  L(\wedge^q H; \mathbb{R})$ is  the alternating map defined by 
\[
A_q\phi(h_1, \cdots, h_q)=\frac{1}{q!}\sum_{\rho\in \mathfrak{S}_q}\sgn(\rho) \phi(h_{\rho(1)}, \cdots,h_{\rho(q)}), 
\]
where $\phi\in L(\otimes^q H; \mathbb{R}), \, h_1,\cdots,h_q\in H$, and the summation is over all $q!$ elements of the symmetric group $\mathfrak{S}_q$, which consists of all permutations of $\{1, \cdots,q\}$.  
Shigekawa's convention for the wedge product is given by 
\[
h_1\wedge\cdots\wedge h_q = q! \,A_{q}(h_1\otimes\cdots\otimes h_q), \quad h_1, \cdots, h_q\in H, 
\]
and $d_q^*$ is dual to $d_q$ with respect to the following inner product on $\wedge^q H$
\[
\langle h_1\wedge\cdots\wedge h_q, g_1\wedge\cdots\wedge g_q\rangle_{\wedge^q H} 
= \det(\langle h_i, g_j\rangle_H), \quad h_1, \cdots, h_q, g_1, \cdots, g_q \in H. 
\]
Note that $d_{q+1}d_q=0$ and $d_q^*d_{q+1}^*=0$.  
The $L^2$ domain $\dom(d_q)$ of $d_q$ is obtained by taking the $L^2$ completion of cylindrical forms.  
In what follows, we denote the closure of the closable operators $D$ and $d_q$ by the same symbols. 
\par
Let $\Delta_q=d_q^*d_q+ d_{q-1}d_{q-1}^*$ be the Hodge-Kodaira Laplacian on $q$-forms, and $\mathfrak{h}_q$ the set of all the harmonic forms of degree $q$, i.e., $\phi\in L^2\Gamma (\wedge^q H)^*$ is in $\mathfrak{h}_q$ if $\phi\in\dom(\Delta_q)$ and $\Delta_q\phi = 0 $.  
\begin{theorem}[Shigekawa \cite{shigekawa1986rham}]
\label{th:shigekawa}
$L^2\Gamma (\wedge^q H)^*= \image(d_{q-1})\oplus \image(d_q^*)\oplus\mathfrak{h}_q$, 
where 
\begin{description}
\item[1.] $\image(d_{q-1}) = \Ker(d_q)$;  
\item[2.] $\image(d_q^*) = \Ker(d_{q-1}^*)$;  
\item[3.] 
$\mathfrak{h}_q=\{ 0 \}$ for $q\ge 1$, and $\mathfrak{h}_0=\{\mbox{constant functions}\}$.
\end{description}
\end{theorem}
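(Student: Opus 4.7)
The plan is to prove the theorem by constructing explicit Clark-Ocone-type integral representations for differential $q$-forms, extending (\ref{eq:CO_0}) and Wu's formulation (\ref{eq:CO_0_Wu}) from zero-forms to higher degree. The abstract Hilbert-space skeleton of the argument is standard: since $d_q d_{q-1}=0$, one has $\langle d_{q-1}\alpha, d_q^*\beta\rangle=0$, so $\image(d_{q-1})$, $\image(d_q^*)$ and $\mathfrak{h}_q:=\Ker(d_q)\cap\Ker(d_{q-1}^*)$ are pairwise orthogonal, giving the weak decomposition
\[
L^2\Gamma(\wedge^q H)^* = \overline{\image(d_{q-1})}\oplus \overline{\image(d_q^*)}\oplus\mathfrak{h}_q.
\]
This reduces the theorem to two non-formal claims: (a) the images of $d_{q-1}$ and of $d_q^*$ are closed, and (b) $\mathfrak{h}_q=\{0\}$ for $q\ge 1$ while $\mathfrak{h}_0$ consists of constant functions.

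My approach to both (a) and (b) is to produce, for a given $\phi\in L^2\Gamma(\wedge^q H)^*$, an \emph{explicit} Skorohod integral representation. Modelling on Wu's identity $F=\mathbb{E}F+\delta P_V\nabla F$, I would introduce an appropriate notion of adaptedness for $\wedge^q H$-valued processes, together with a projection operator $\mathbf{P}_q$ onto the corresponding adapted subspace, and aim at a formula schematically of the form
\[
\phi = d_{q-1}\bigl(\delta\, \mathbf{P}_{q-1}\,\cdots\bigr) \;+\; d_q^*\bigl(\delta^{-1}\mathbf{P}_{q+1}\,\cdots\bigr) \;+\; h_\phi ,
\]
where the first summand is built by iterated conditional expectations against the filtration $\{\mathcal{F}_t\}$ composed with the alternating map $A_q$, and $h_\phi$ is a harmonic remainder produced from repeated $\mathbb{E}[\,\cdot\,|\mathcal{F}_0]$-type operations. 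The key output will be: if $d_q\phi=0$, then $\phi$ equals $d_{q-1}$ applied to an explicit form (proving $\Ker(d_q)\subseteq\image(d_{q-1})$), and dually if $d_{q-1}^*\phi=0$.

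With these explicit formulae in hand, item 1 follows by combining $\image(d_{q-1})\subseteq\Ker(d_q)$ (from $d_q d_{q-1}=0$) with the reverse inclusion just produced, and item 2 follows by the dual argument; closedness of both images is an automatic corollary. Item 3 then falls out: for $q\ge 1$, any $\phi\in\mathfrak{h}_q$ is simultaneously in $\Ker(d_q)=\image(d_{q-1})$ and in $\Ker(d_{q-1}^*)=\image(d_q^*)$, so by the orthogonality of these two subspaces $\phi=0$; for $q=0$ the co-closedness condition is vacuous, and $d_0 F=0$ means $\nabla F=0$, which by (\ref{eq:CO_0}) forces $F=\mathbb{E}F$.

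The main obstacle will be the construction and analysis of the projection $\mathbf{P}_q$ at the level of $q$-forms. One must identify the correct adapted subspace of $L^2(C_0;\otimes^q H)$ so that an analogue of Wu's smoothing identity $\delta(P_V\nabla)=P_{\delta(V)}$ survives, and so that the combinatorial factors introduced by $A_{q+1}$ in $d_q=(q+1)A_{q+1}D$ mesh cleanly with iterated Itô integration. Ensuring that the resulting potential lies in $\dom(d_{q-1})$ (or $\dom(d_q^*)$) rather than only in an $L^2$ limit is what will upgrade the weak orthogonal decomposition to the closed-range decomposition of Theorem~\ref{th:shigekawa}; this is where the filtration structure of the classical Wiener space, absent in the general abstract Wiener setting, is genuinely used.
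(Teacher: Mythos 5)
Your plan is essentially the paper's own route: Theorem \ref{th:shigekawa} is obtained there precisely by establishing generalised Clark--Ocone representations for $q$-forms (Theorems \ref{th:CO_q} and \ref{th:CO_q*}, built from the adapted projections $P^1_{V^{(q)}}$ of Remark \ref{re:P_V} and the commutation Lemmas \ref{lem:commutation_dq*} and \ref{lem:commutation_dq}), whose corollaries give $\Ker(d_q)=\image(d_{q-1})$ and $\Ker(d_{q-1}^*)=\image(d_q^*)$, after which item 3 follows from orthogonality exactly as you argue. The only adjustment to your schematic: the paper does not produce a single three-term identity $\phi=d_{q-1}(\cdots)+d_q^*(\cdots)+h_\phi$ --- it explicitly remarks that no such explicit Hodge decomposition is obtained, the two terms in each of (\ref{eq:CO_q}) and (\ref{eq:CO_q*}) not being orthogonal --- but instead proves two separate two-term identities of the form (\ref{eq:Hodge_q}) and (\ref{eq:Hodge_q*}), which is all your ``key output'' actually requires.
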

To emulate the case of zero-forms, we seek  the following representations for  $L^2$ $H$-$q$-forms, $q\in \mathbb{N}$: 
\begin{equation}
\label{eq:Hodge_q}
\phi = d_{q-1}\psi + M_q(d_q\phi), \quad\forall \phi\in 
\dom(d_q),
\end{equation}
and
\begin{equation}
\label{eq:Hodge_q*}
\phi = d_q^*\theta + N_q(d_{q-1}^*\phi), \quad\forall \phi\in 
\dom(d_{q-1}^*),
\end{equation}
where $M_q$ and $N_q$ are nice linear functions. 
Such expressions imply the theorem above since, for any $L^2$ $H$-$q$-form $\phi$, 
\begin{description}
\item[1.]
$d_q\phi = 0 \iff \phi=d_{q-1}\psi,\mbox{ some }\psi\in \dom(d_{q-1})\subset L^2\Gamma(\wedge^{(q-1)} H)^*$; and
\item[2.]
$d_{q-1}^*\phi = 0 \iff \phi=d_q^*\theta,\mbox{ some } \theta\in \dom(d_q^*) \subset L^2\Gamma(\wedge^{(q+1)} H)^*$.
\end{description} 
\par
To proceed, we note first that the Riesz representation theorem gives a natural isomorphism between the $L^2$ $H$-forms and $L^2$ skew-symmetric $H$-vector-fields $L^2\Gamma(\wedge^q H)$. 
Some of our expressions are more conveniently written in terms of vector fields, and we switch between differential forms and skew-symmetric vector fields using the above isomorphism and the following 
\begin{no}
Given any $u \in L^2\Gamma (\wedge^q H)^*$, we define $u^\sharp \in L^2\Gamma (\wedge^q H)$ by 
\[
u(h)=\langle u^\sharp, h\rangle_{ \wedge^q H} ,\quad h\in L^2\Gamma (\wedge^q H).
\]
Similarly for $u \in L^2\Gamma (\wedge^q H)$, we define  $u^\flat\in L^2\Gamma (\wedge^q H)^*$ by 
\[
u^\flat(h)=\langle u, h\rangle_{ \wedge^q H},\quad h\in L^2\Gamma (\wedge^q H).
\]
\end{no}
\begin{no}
Corresponding to the exterior derivative $d_q$, we define an operator
$d_{q}^\sharp:\dom(d_{q}^\sharp)\subset L^2\Gamma (\wedge^q H)\rightarrow L^2\Gamma (\wedge^{(q+1)} H)$ 
on skew-symmetric $q$-vector fields by  
\[
d_{q}^\sharp u=(d_q u^\flat)^\sharp, \quad u\in L^2\Gamma (\wedge^q H)
\]
where $u \in \dom(d_{q}^\sharp)$ iff $u^\flat\in \dom(d_q)$. 
Similarly for $d_q^*$, we define  
\[
d_{q}^{*\sharp} u=(d_{q}^* u^\flat)^\sharp, \quad u\in L^2\Gamma (\wedge^{(q+1)} H),
\]
where $u \in \dom(d_q^{*\sharp})$ iff $u^\flat\in \dom(d_q^{*})$. 
We note that $d_q^{*\sharp} = d_q^{\sharp*}$, and from (\ref{eq:dq}) we obtain 
\[
d_q^\sharp =  (q+1)A_{q+1}\nabla, \quad \mbox{ and } \quad 
d_q^{*\sharp} = \delta|_{L^2\Gamma (\wedge^{(q+1)} H)}.
\]
Note that for a vector-valued $\mathbb{D}^{2,1}$ function $F:C_0\rightarrow X$, where $X$ is a separable Hilbert space, the $H$-gradient $\nabla F: C_0\rightarrow H\otimes X$ is defined the same way as in (\ref{eq:grad}).  
Similarly, the adjoint map 
\[
\delta: \dom(\delta)\subset L^2\Gamma(H\otimes X) \rightarrow L^2\Gamma(X)
\] 
still coincides with the Skorohod integral. 
\end{no}
\begin{no}
As an analogue of the interior product, we define  
\[
{}_{(i)}\!\langle h_1 \otimes \cdots \otimes h_k, h_{k+1}\rangle_H = \langle h_i, h_{k+1}\rangle_H h_1\otimes\cdots\otimes \hat{h}_i\otimes\cdots\otimes h_k, 
\]
where $h_j\in H$ for $j=1$ to $k+1$, and $\hat{h}_i$ indicates the omission of $h_i$.   When $i=1$, we recover the standard interior product: 
\begin{eqnarray*}
{}_{(1)}\!\langle h_1\wedge \cdots \wedge h_k, h_{k+1}\rangle_H 
&=& \sum_{j=1}^k (-1)^{j-1} \langle h_j, h_{k+1}\rangle_H h_1\wedge \cdots\wedge\hat{h}_j\wedge \cdots \wedge h_k \\
&=&\iota_{h_{k+1}} (h_1 \wedge \cdots \wedge h_k).
\end{eqnarray*}
\end{no}
\par
Let $ev_s: C_0\rightarrow \mathbb{R}^m$ be the evaluation map at time $s\in[0,T]$, i.e, for any $u\in C_0$, we have $ev_s(u)=u_s$. For a tensor $u\in \otimes^q_\epsilon C_0$, with $\otimes^q_\epsilon C_0$ denoting the injective tensor product space of $C_0$, we can make use of the isometry 
\[
\mathbf{i}: \otimes^q_\epsilon C_0\rightarrow C_0([0,T]^q; \otimes^q \mathbb{R}^m),
\]
where 
 $C_0([0,T]^q; \otimes^q \mathbb{R}^m)$ consists of continuous functions $\sigma : [0, T ]^q \rightarrow\otimes^q \mathbb{R}^m$ such that $\sigma(t_1, \cdots, t_q)= 0$ if $t_j = 0$ for any integer $j$ between $1$ and $q$ (see \cite{elworthy2008l2} for a more detailed description). So we have
\[
u_{s_1, \cdots, s_q} =\mathbf{i}(u)(s_1, \cdots, s_q) = (ev_{s_1}\otimes\cdots\otimes ev_{s_q} )u. 
\]
We also recall the isomorphism between the Hilbert spaces $L^{2}([0,T]; \mathbb{R}^{m})$ and $H=L^{2,1}_0([0,T]; \mathbb{R}^{m})$ given by the indefinite integral 
\[
\int_0^.: L^{2}([0,T];\mathbb{R}^{m})\rightarrow H, 
\]
with the inverse map 
\[
\frac{d}{d.}: H\rightarrow L^{2}([0,T];\mathbb{R}^{m}).
\]
This gives rise to an isometry between the Hilbert spaces of the tensor powers $\otimes^qH$ and $\otimes^qL^2([0, T ]; \mathbb{R}^m)\cong L^2([0, T ]^q;\otimes^q \mathbb{R}^m)$.  Therefore, $u\in \otimes^q H$ iff  
\[
u_{s_1, \cdots, s_q} 
= \int_0^{s_1}\cdots\int_0^{s_q} \frac{\partial^{q}}{\partial r_1 \cdots \partial r_q}u_{r_1, \cdots,r_q}dr_1\cdots dr_q. 
\]
\begin{no}
\label{no:partial}
We use the shorthand notation 
\[
\partial^q_{r_1, \cdots, r_q} u= \frac{\partial^{q}}{\partial r_1 \cdots \partial r_q}u_{r_1, \cdots,r_q}
\]
from now on. 
We also write  $D_r u =\frac{d}{dr}( \nabla u )_r$ for $u \in \mathbb{D}^{2,1}$, following Nualart's notation \cite{nualart2006malliavin}. 
\end{no}
For example, we can write, for $u\in\dom(d_q^\sharp)$, a.e. $s_1, \cdots, s_{q+1}\in[0,T]$,   
\begin{eqnarray}
\label{eq:CWS_dq_B}
& &\partial^{q+1}_{s_1, \cdots, s_{q+1}} (d_q^{\sharp} u)\notag\\
&=& \sum_{j=1}^{q+1}(-1)^{j-1}D_{s_j}\partial^q_{s_1, \cdots,\hat{s}_j,\cdots, s_{q+1}} u\\ 
\label{eq:CWS_dq_C}
&=& 
D_{s_{1}} \partial^q_{s_2, \cdots,s_{q+1}} u
- \sum_{j=1}^{q}\tau_{1, j+1} D_{s_j}\partial^q_{s_2, \cdots,s_{j-1}, s_{1}, s_{j+1},\cdots, s_{q+1}} u.
\end{eqnarray}
Here $\tau_{i,j}: \otimes^n H\rightarrow \otimes^n H$ is the transposition operator, which acts by exchanging the $i$-th and $j$-th components of a tensor; that is, given any $h_1, \cdots, h_n \in H$, 
\[
\tau_{i,j}(h_1\otimes\cdots\otimes h_i\otimes\cdots\otimes h_j\otimes\cdots\otimes h_n)
=h_1\otimes\cdots\otimes h_j\otimes\cdots\otimes h_i\otimes\cdots\otimes h_n.
\]  
We write simply $\tau$ when it acts on a two-tensor, and often omit it when the indices of a tensor product give a clear indication of the ordering. 
In particular, for $q=1$, we have
\begin{equation}
\label{eq:d1_sharp_u}
\partial^2_{s_1, s_2}(d_1^{\sharp} u) = D_{s_1}(\frac{d}{d_{s_2}}u) - \tau [D_{s_2}(\frac{d}{d_{s_1}}u)].
\end{equation}
\section{Generalised Clark-Ocone Formulae}
\label{sec:CO-q}
Let $q\in \mathbb{N}$, and set $\bar{s} =\max_{i=1}^q s_i$ for $s_1, \cdots, s_q\in\mathbb{R}$.
\begin{theorem}[Clark-Ocone Formula for q-Forms: I]
\label{th:CO_q}
If $u\in \dom(d_q)$, the skew-symmetric $(q-1)$-vector-field 
$T_{q-1}(u)\in L^2\Gamma(\wedge^{(q-1)} H)$ defined by 
\begin{equation}
\label{eq:T_q-1}
T_{q-1}(u)
=\int_0^.\!\!\cdots\!\!\int_0^.\int_{\max_{i=2}^{q} r_i}^T \!\!{}_{(1)}\!\langle \mathbb{E}(\partial^q_{r_1, \cdots, r_q} u^\sharp |\mathcal{F}_{r_1}),dB_{r_1}\rangle_{\mathbb{R}^{m}} dr_2\cdots dr_{q}, 
\end{equation}
is in the domain of $d_{q-1}^{\sharp}$, and, for a.e. $s_1, \cdots, s_q\in[0,T]$,  
\begin{equation}
\label{eq:CO_q}
\partial^q_{s_1, \cdots, s_q} u^\sharp =
\partial^q_{s_1, \cdots, s_q}[d_{q-1}^{\sharp}T_{q-1}(u)]
+\int_{\bar{s}}^T\!\!\!{}_{(1)}\!\langle \mathbb{E}[\partial^{q+1}_{r, s_1, \cdots, s_q} (d_q u)^\sharp   |\mathcal{F}_r], dB_r\rangle_{\mathbb{R}^{m}}.
\end{equation}
Moreover, if $u\in \mathbb{D}^{2,k}\Gamma(\wedge^q H)^*$, we have $T_{q-1}(u)\in \mathbb{D}^{2,k+1}\Gamma(\wedge^{(q-1)} H)$. 
\end{theorem}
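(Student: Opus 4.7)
The plan is to derive (\ref{eq:CO_q}) by direct expansion: compute $\partial^q_{s_1,\ldots,s_q}[d_{q-1}^\sharp T_{q-1}(u)]$ via the Malliavin-It\^o calculus, expand the final martingale-type integral using (\ref{eq:CWS_dq_B}), and match the sum with the classical Clark-Ocone formula applied pointwise in $(s_1,\ldots,s_q)$ to the $(\mathbb{R}^m)^{\otimes q}$-valued random variable $\partial^q_{s_1, \ldots, s_q} u^\sharp$. I first establish the identity for sufficiently smooth $u$ (say $u^\sharp$ cylindrical or $u \in \mathbb{D}^{2,2}\Gamma(\wedge^q H)^*$), so that the Malliavin derivatives of the It\^o integrals appearing in $T_{q-1}(u)$ are classical, and then extend to $u \in \dom(d_q)$ by density and closability.

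Applying (\ref{eq:CWS_dq_B}) with $q$ replaced by $q-1$,
\begin{equation*}
\partial^q_{s_1, \ldots, s_q}[d_{q-1}^\sharp T_{q-1}(u)] = \sum_{j=1}^q (-1)^{j-1} D_{s_j}\, \partial^{q-1}_{s_1, \ldots, \hat{s}_j, \ldots, s_q} T_{q-1}(u).
\end{equation*}
The commutation rule $D_t \int F_r\, dB_r = F_t + \int D_t F_r\, dB_r$, applied inside the definition (\ref{eq:T_q-1}), yields for each $j$ a boundary contribution $\mathbb{E}[\partial^q_{s_j, s_1, \ldots, \hat{s}_j, \ldots, s_q} u^\sharp \mid \mathcal{F}_{s_j}]\mathbf{1}_{s_j \ge \max_{i\ne j} s_i}$ together with an integrated correction $\int_{\bar{s}}^T {}_{(1)}\!\langle \mathbb{E}[D_{s_j}\partial^q_{r_1, s_1, \ldots, \hat{s}_j, \ldots, s_q} u^\sharp \mid \mathcal{F}_{r_1}], dB_{r_1}\rangle$; the lower limit tightens to $\bar{s}$ because $D_{s_j}$ annihilates $\mathcal{F}_{r_1}$-measurable random variables whenever $r_1 < s_j$. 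The boundary indicator is active only at the unique index $j_*$ with $s_{j_*} = \bar{s}$, and the skew-symmetry of $u^\sharp$---which contributes a sign $(-1)^{j_*-1}$ when the $j_*$-cycle re-indexes $\partial^q_{s_{j_*}, s_1, \ldots, \hat{s}_{j_*}, \ldots, s_q}$ into canonical order---exactly cancels the $(-1)^{j_*-1}$ from the sum, so the signed boundary contributions collapse to $\mathbb{E}[\partial^q_{s_1,\ldots,s_q} u^\sharp \mid \mathcal{F}_{\bar{s}}]$.

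For the martingale-type term in (\ref{eq:CO_q}), (\ref{eq:CWS_dq_B}) gives
\begin{equation*}
\partial^{q+1}_{r, s_1, \ldots, s_q}(d_q u)^\sharp = D_r\, \partial^q_{s_1, \ldots, s_q} u^\sharp + \sum_{j=1}^q (-1)^j D_{s_j}\, \partial^q_{r, s_1, \ldots, \hat{s}_j, \ldots, s_q} u^\sharp.
\end{equation*}
Integrating ${}_{(1)}\!\langle \mathbb{E}[\,\cdot \mid \mathcal{F}_r], dB_r\rangle$ over $[\bar{s}, T]$ and summing with the previous expression for $\partial^q[d_{q-1}^\sharp T_{q-1}(u)]$, the $D_{s_j}$ integrands appear with opposite signs $(-1)^{j-1}$ and $(-1)^j$ and cancel pairwise. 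The remainder equals
\begin{equation*}
\mathbb{E}[\partial^q_{s_1,\ldots,s_q} u^\sharp \mid \mathcal{F}_{\bar{s}}] + \int_{\bar{s}}^T \mathbb{E}[D_r \partial^q_{s_1,\ldots,s_q} u^\sharp \mid \mathcal{F}_r]\, dB_r,
\end{equation*}
which by the conditional form of the classical Clark-Ocone formula is precisely $\partial^q_{s_1,\ldots,s_q} u^\sharp$, establishing (\ref{eq:CO_q}). For the Sobolev statement, $T_{q-1}(u)$ is a composition of bounded Malliavin-Sobolev operations---Cameron-Martin integrations in the parameters, conditional expectations, and an It\^o integration of an adapted integrand derived from $\partial^q u^\sharp$---and the gain of one Malliavin degree from It\^o integration of an adapted $\mathbb{D}^{2,k}(H)$ process, via Wu's $L^2$-bound on $\delta P_V$ and Meyer's inequalities, yields $T_{q-1}(u) \in \mathbb{D}^{2,k+1}\Gamma(\wedge^{q-1} H)$.

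The principal obstacle is the combinatorial bookkeeping: tracking tensor-factor positions, signs, and the transpositions $\tau_{i,j}$ implicit in (\ref{eq:CWS_dq_B}), so that the Malliavin factor introduced by $D_{s_j}$ sits at position $j$ of the resulting $q$-tensor, matching the corresponding position in the expansion of $\partial^{q+1}(d_q u)^\sharp$ and thus permitting the pairwise cancellation. The collapse of the boundary sum to the single conditional expectation $\mathbb{E}[\partial^q u^\sharp \mid \mathcal{F}_{\bar{s}}]$, driven by the skew-symmetry of $u^\sharp$, is the other delicate ingredient and is what makes the formula (\ref{eq:T_q-1}) the correct candidate for $T_{q-1}(u)$ in the first place.
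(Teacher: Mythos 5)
Your proposal is correct and follows essentially the same route as the paper: apply the classical Clark--Ocone formula to $\partial^q_{s_1,\ldots,s_q}u^\sharp$ conditioned on $\mathcal{F}_{\bar{s}}$, compute $\partial^q[d_{q-1}^\sharp T_{q-1}(u)]$ via the $[\nabla,\delta]$ commutation together with the Nualart--Pardoux rule $D_{s_j}\mathbb{E}(\,\cdot\,|\mathcal{F}_r)=\mathbb{E}(D_{s_j}\,\cdot\,|\mathcal{F}_r)\mathbf{1}_{(s_j,T]}(r)$, collapse the signed boundary terms to $\mathbb{E}(\partial^q u^\sharp|\mathcal{F}_{\bar{s}})$ by skew-symmetry, and match the remaining stochastic integrals against the expansion of $(d_qu)^\sharp$ before extending to $\dom(d_q)$ by density and closedness of $d_{q-1}$. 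The only difference is presentational (you add the two terms and verify they reproduce the conditioned Clark--Ocone identity, whereas the paper subtracts and identifies the residue as the $d_qu$ term), which is an algebraic rearrangement of the same argument.
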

\begin{re}
For the case of $q=1$, we have 
\[
\frac{d}{ds}u_{s}^\sharp
=D_s\int_{0}^T\langle \mathbb{E}(\frac{d}{d r}u_{r}^\sharp  |\mathcal{F}_r), dB_r\rangle_{\mathbb{R}^{m}} +
\int_{s}^T\!\!\!{}_{(1)}\!\langle \mathbb{E}[\frac{\partial^2}{\partial r\partial s}(d_1u)^\sharp   |\mathcal{F}_r], dB_r\rangle_{\mathbb{R}^{m}}.
\]
This generalises to Riemannian path spaces and proves that there exist no harmonic $L^2$ one-forms there \cite{elworthy2011vanishing}.
\end{re}
\begin{cor}[Closed q-Forms]
\label{cor:dq_kernel}
$u\in\Ker(d_q)\implies u= d_{q-1} [T_{q-1}(u)^\flat]$, where the skew-symmetric $(q-1)$-vector-field $T_{q-1}(u)\in\dom(d_q^\sharp)$ is defined in (\ref{eq:T_q-1}). That is, any $u\in\Ker(d_q)$ can be expressed as follows: for a.e. $s_1, \cdots, s_q\in[0,T]$,  
\begin{eqnarray*}
& & \partial^q_{s_1, \cdots,s_q}u^\sharp\notag\\
&=& \partial^q_{s_1, \cdots,s_q} [d_{q-1}^{\sharp}T_{q-1}(u)]\\
&=&\mathbb{E}(\partial^q_{s_1, \cdots, s_q} u^\sharp | \mathcal{F}_{\bar{s}}) + \sum_{j=1}^q \int_{\bar{s}}^T \!\!\!{}_{(j+1)}\!\langle 
\mathbb{E}(D_{s_j} \partial^q_{s_1, \cdots,s_{j-1}, r,s_{j+1}, \cdots, s_q} u^\sharp|\mathcal{F}_{r}),dB_{r}\rangle_{\mathbb{R}^{m}}.\notag
\end{eqnarray*}
\end{cor}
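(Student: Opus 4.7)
The plan is to reduce the corollary to Theorem \ref{th:CO_q} and then unpack $d_{q-1}^\sharp T_{q-1}(u)$ by direct computation. Under the hypothesis $d_q u = 0$, the second term on the right-hand side of (\ref{eq:CO_q}) vanishes, leaving $\partial^q_{s_1,\ldots,s_q} u^\sharp = \partial^q_{s_1,\ldots,s_q}[d_{q-1}^\sharp T_{q-1}(u)]$ for a.e.\ $(s_1,\ldots,s_q)$. Since the map $v\mapsto \partial^q v$ is an isometric isomorphism from $\otimes^q H$ onto $L^2([0,T]^q;\otimes^q\mathbb{R}^m)$, this upgrades to $u^\sharp = d_{q-1}^\sharp T_{q-1}(u)$; passing through $(\cdot)^\flat$ and using $T_{q-1}(u)\in\dom(d_{q-1}^\sharp)$ from Theorem \ref{th:CO_q} delivers $u = d_{q-1}[T_{q-1}(u)^\flat]$, which is the first displayed identity of the corollary.

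For the explicit second equality, I would expand $\partial^q_{s_1,\ldots,s_q}[d_{q-1}^\sharp T_{q-1}(u)]$ via (\ref{eq:CWS_dq_B}), applied with $q$ replaced by $q-1$ and the field $T_{q-1}(u)$ in place of $u$, to get $\sum_{j=1}^{q}(-1)^{j-1} D_{s_j}\,\partial^{q-1}_{s_1,\ldots,\hat{s}_j,\ldots,s_q} T_{q-1}(u)$. Differentiating the iterated integral defining $T_{q-1}(u)$ in its outer upper limits then produces, for each $j$,
\[
\partial^{q-1}_{s_1,\ldots,\hat{s}_j,\ldots,s_q} T_{q-1}(u) = \int_{a_j}^T {}_{(1)}\!\langle \mathbb{E}(\partial^q_{r, s_1,\ldots,\hat{s}_j,\ldots,s_q} u^\sharp \,|\, \mathcal{F}_r), dB_r\rangle_{\mathbb{R}^m},
\]
with $a_j := \max_{i\ne j} s_i$. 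The skew-symmetry of $u^\sharp$ lets one cycle $r$ from position $1$ to position $j$ at the cost of $(-1)^{j-1}$, cancelling the sign in the exterior-derivative formula; the contraction index simultaneously moves from ${}_{(1)}$ to ${}_{(j)}$, and finally to ${}_{(j+1)}$ once $D_{s_j}$ adjoins a leading Malliavin slot.

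The remaining computation is governed by the Malliavin-It\^o commutation $D_s\int_a^T Y_r dB_r = \mathbf{1}_{[a,T]}(s) Y_s + \int_{\max(a,s)}^T D_s Y_r dB_r$ for adapted $Y$, together with the identity $D_{s_j}\mathbb{E}(\cdot|\mathcal{F}_r) = \mathbb{E}(D_{s_j}\cdot|\mathcal{F}_r)$ valid whenever $s_j\le r$. For a.e.\ $(s_1,\ldots,s_q)$ the maximum is attained at a unique index $k$; for $j\ne k$ we have $s_j<a_j=\bar s$, the boundary term $Y_{s_j}$ vanishes, and $D_{s_j}$ passes through both the lower limit and the conditional expectation to produce exactly the $j$-th summand of the target formula. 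For $j=k$, the boundary contribution supplies $\mathbb{E}(\partial^q_{s_1,\ldots,s_q} u^\sharp \,|\, \mathcal{F}_{\bar s})$ while the surviving It\^o integral matches the $k$-th summand, and the $q$ contributions assemble into the stated expression.

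The principal obstacle I anticipate is purely combinatorial: confirming that the skew-symmetry sign exactly cancels the $(-1)^{j-1}$ from the exterior derivative, that the contraction slot migrates consistently to $(j+1)$, and that the single boundary value at $r=\bar s$ reassembles without stray interior products into the claimed $\mathbb{E}(\partial^q_{s_1,\ldots,s_q} u^\sharp\,|\,\mathcal{F}_{\bar s})$. Beyond this bookkeeping, the argument is a direct application of the Clark-Ocone-type machinery already developed for Theorem \ref{th:CO_q}.
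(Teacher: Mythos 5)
Your argument is correct and follows essentially the same route as the paper: the first equality is Theorem \ref{th:CO_q} with the $d_q u$ term killed, and your expansion of $\partial^q_{s_1,\cdots,s_q}[d_{q-1}^{\sharp}T_{q-1}(u)]$ via (\ref{eq:CWS_dq_B}), the commutation relation, and the Nualart--Pardoux identity for $D_{s_j}\mathbb{E}(\cdot\,|\mathcal{F}_r)$ is precisely the computation the paper carries out in display (\ref{eq:CWS_conditioned_CO_qform1}) inside the proof of Theorem \ref{th:CO_q}, including the sign cancellation and the migration of the contraction slot to position $j+1$.
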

\begin{cor}
$\image(d_{q})= \Ker(d_{q+1})$, so the image of $d_q$ is closed. 
\end{cor}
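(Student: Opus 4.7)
The plan is to establish the equality of sets in both directions and then deduce closedness as an immediate corollary. The inclusion $\image(d_q) \subseteq \Ker(d_{q+1})$ is the easy half: it follows at once from the relation $d_{q+1}d_q = 0$ noted after the definition of the exterior derivatives in Section \ref{sec:notation}.

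For the reverse inclusion, I would apply Corollary \ref{cor:dq_kernel} with $q$ replaced by $q+1$. Concretely, given $u \in \Ker(d_{q+1})$, the corollary produces a skew-symmetric $q$-vector field $T_q(u) \in L^2\Gamma(\wedge^q H)$ with $T_q(u)^\flat \in \dom(d_q)$ and
\[
u = d_q\bigl[T_q(u)^\flat\bigr],
\]
exhibiting $u$ as an element of $\image(d_q)$. This step is purely a matter of re-indexing the previous corollary; no new computation is needed.

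To conclude that $\image(d_q)$ is closed, I would invoke the general fact that the kernel of a closed (unbounded) operator is a closed subspace of the ambient Hilbert space: if $\phi_n \in \Ker(d_{q+1})$ with $\phi_n \to \phi$ in $L^2\Gamma(\wedge^{q+1} H)^*$, then $d_{q+1}\phi_n = 0 \to 0$, so closedness of $d_{q+1}$ (as the closure of its cylindrical restriction, per the convention fixed after (\ref{eq:dq})) forces $\phi \in \dom(d_{q+1})$ and $d_{q+1}\phi = 0$. Since $\image(d_q)$ coincides with this closed kernel by the first two steps, it is itself closed.

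The only part requiring any care is verifying that the representation furnished by Corollary \ref{cor:dq_kernel} truly lies in $\image(d_q)$ as an operator equality in $L^2$, i.e., that $T_q(u)^\flat$ really belongs to $\dom(d_q)$ (not just the cylindrical domain). This is already built into the statement of that corollary, so there is no genuine obstacle — the proof is essentially a one-line invocation of Corollary \ref{cor:dq_kernel} sandwiched between the trivial inclusion $d_{q+1}d_q = 0$ and the abstract closed-operator fact.
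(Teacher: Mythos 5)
Your proposal is correct and matches the paper's (implicit) argument: the corollary is stated as a direct consequence of Corollary \ref{cor:dq_kernel} applied with $q$ replaced by $q+1$, combined with the trivial inclusion from $d_{q+1}d_q=0$ and the closedness of $\Ker(d_{q+1})$ for the closed operator $d_{q+1}$. No discrepancy to report.
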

\begin{theorem}[Clark-Ocone Formula for q-Forms: II]
\label{th:CO_q*}
If $u\!\in\! \dom(d_{q-1}^*)$,  
the skew-symmetric $(q+1)$-vector-field 
$S_{q+1}(u)\in L^2\Gamma (\wedge^{(q+1)} H)$ defined by 
\begin{eqnarray}
\label{eq:S_q+1}
& & S_{q+1}(u)\\
&=& 
\int_0^.\!\!\cdots\!\!\int_0^.
\{\mathbb{E}[\mathbf{1}_{(\bar{r}, T]}(r)D_r\partial^q_{r_1, \cdots, r_q}u^\sharp |\mathcal{F}_{r}]  \notag\\
& &- 
\sum_{j=1}^q\mathbb{E}[
\mathbf{1}_{r_j=\max(r,\bar{r})}
\tau_{1,j+1}(D_{r_j}\partial^q_{r_1, \cdots,r_{j-1}, r,r_{j+1},\cdots, r_q} u^\sharp) |\mathcal{F}_{r_j}] \}
\,dr dr_1\cdots dr_{q}
\notag
\end{eqnarray}
is in the domain of $d_q^{*\sharp}$, 
and,  for a.e. $s_1, \cdots, s_q\in[0,T]$,  
\begin{eqnarray}
\label{eq:CO_q*}
\partial^q_{s_1, \cdots, s_q} u^\sharp 
&=&\partial^q_{s_1, \cdots,s_q}[d_q^{*\sharp}S_{q+1}(u)]  \notag\\
& & +
\sum_{j=1}^q(-1)^{j-1}\mathbf{1}_{s_j=\bar{s}}\,\mathbb{E}[D_{s_j} \partial^{q-1}_{s_1, \cdots,\hat{s}_{j}, \cdots, s_q} (d_{q-1}^* u)^\sharp|\mathcal{F}_{s_j}]
\end{eqnarray}
\end{theorem}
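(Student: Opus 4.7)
My plan is to apply the vector-valued Clark--Ocone formula pointwise to each coordinate $\partial^q_{s_1,\ldots,s_q}u^\sharp$, and then match the resulting expectation and stochastic integral termwise with the right-hand side of (\ref{eq:CO_q*}). I first work under the stronger assumption $u\in\mathbb{D}^{2,k}\Gamma(\wedge^q H)^*$ for $k$ large enough that all Malliavin derivatives and Skorohod integrals exist classically; the general case $u\in\dom(d_{q-1}^*)$ is recovered at the end by density combined with an $L^2$-boundedness estimate on $u\mapsto S_{q+1}(u)$, in the spirit of Wu's observation (\ref{eq:CO_0_Wu}) that $\delta P_V\nabla$ extends continuously to $L^2$.

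Clark--Ocone applied to the $\otimes^q\mathbb{R}^m$-valued functional $\sigma\mapsto\partial^q_{s_1,\ldots,s_q}u^\sharp(\sigma)$ gives
\[
\partial^q_{s_1,\ldots,s_q} u^\sharp = \mathbb{E}[\partial^q_{s_1,\ldots,s_q} u^\sharp \mid \mathcal{F}_{\bar s}]
+\int_{\bar s}^T \mathbb{E}[D_r\partial^q_{s_1,\ldots,s_q} u^\sharp \mid \mathcal{F}_r]\, dB_r,
\]
upon collapsing the portion $\mathbb{E}[\partial^q u^\sharp]+\int_0^{\bar s}\mathbb{E}[D_r\partial^q u^\sharp\mid\mathcal{F}_r]\,dB_r$ into $\mathbb{E}[\partial^q u^\sharp\mid\mathcal{F}_{\bar s}]$ via the martingale $t\mapsto\mathbb{E}[\partial^q u^\sharp\mid\mathcal{F}_t]$. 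The Itô integral on the right coincides with the $\delta$-image of the first (adapted) summand of (\ref{eq:S_q+1}), i.e.\ with the Itô part of $\partial^q_{s_1,\ldots,s_q}\,d_q^{*\sharp}S_{q+1}(u)$. What remains is to show that $\mathbb{E}[\partial^q_{s_1,\ldots,s_q}u^\sharp\mid\mathcal{F}_{\bar s}]$ equals the sum of the genuine Skorohod contribution to $\delta S_{q+1}(u)$ coming from the second summand of (\ref{eq:S_q+1}) plus the remainder $\sum_j(-1)^{j-1}\mathbf{1}_{s_j=\bar s}\mathbb{E}[D_{s_j}\partial^{q-1}_{s_1,\ldots,\hat s_j,\ldots,s_q}(d_{q-1}^*u)^\sharp\mid\mathcal{F}_{s_j}]$.

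For this I use $(d_{q-1}^*u)^\sharp=\delta u^\sharp$ together with the Malliavin--Skorohod commutator $D_t\delta(g)=g(t)+\delta(D_tg)$, which produces
\[
D_{s_j}\partial^{q-1}_{s_1,\ldots,\hat s_j,\ldots,s_q}(d_{q-1}^*u)^\sharp=\partial^q_{s_j,s_1,\ldots,\hat s_j,\ldots,s_q}u^\sharp+\int_0^T D_{s_j}\partial^q_{r,s_1,\ldots,\hat s_j,\ldots,s_q}u^\sharp\,dB_r.
\]
The antisymmetry of $u^\sharp$ under the diagonal $\mathfrak{S}_q$ action rewrites the boundary term as $(-1)^{j-1}$ times $\partial^q_{s_1,\ldots,s_q}u^\sharp$ (up to the tensor-factor transposition $\tau_{1,j+1}$ that appears throughout (\ref{eq:S_q+1}) and (\ref{eq:CO_q*})); after multiplication by $(-1)^{j-1}\mathbf{1}_{s_j=\bar s}$ and conditioning on $\mathcal{F}_{s_j}=\mathcal{F}_{\bar s}$, this delivers precisely $\mathbb{E}[\partial^q_{s_1,\ldots,s_q}u^\sharp\mid\mathcal{F}_{\bar s}]$. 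The surviving Skorohod integrals must then cancel against the second-summand contribution to $\delta S_{q+1}(u)$, which is itself a genuine Skorohod (not Itô) integral because its integrand is only $\mathcal{F}_{s_j}$-measurable, not $\mathcal{F}_r$-adapted. The principal obstacle is the combinatorial and sign bookkeeping required for this last cancellation: verifying that the signs $(-1)^{j-1}$, the transpositions $\tau_{1,j+1}$, the filtration shifts from $\mathcal{F}_r$ to $\mathcal{F}_{\bar s}$, and the indicators $\mathbf{1}_{(\bar r,T]}(r)$ and $\mathbf{1}_{r_j=\max(r,\bar r)}$ built into (\ref{eq:S_q+1}) all line up to give an exact cancellation. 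Once the identity is established on $\mathbb{D}^{2,k}$, the $L^2$-continuity of $S_{q+1}$ (Itô isometry for the adapted first summand and contraction of conditional expectations for the second) and of the remainder operator closes the density argument, yielding the formula for every $u\in\dom(d_{q-1}^*)$.
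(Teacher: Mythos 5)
Your proposal follows essentially the same route as the paper's proof: condition the vector-valued Clark--Ocone representation at $\mathcal{F}_{\bar s}$, expand the remainder term via $(d_{q-1}^*u)^\sharp=\delta u^\sharp$ and the commutator $D_t\delta = \mathrm{id}+\delta D_t$ (the paper's Lemma on the commutation formula for $d_q^*$), observe that the resulting Skorohod integrals over $[0,s_j]$ cancel against the second summand of $\delta S_{q+1}(u)$, and close with $L^2$-continuity of $S_{q+1}$ plus closedness of $d_q^{*\sharp}$. The sign and transposition bookkeeping you defer is exactly what the paper's displayed computations (\ref{eq:CO_qform*_1}) and (\ref{eq:CO_qform*_2}) carry out, and it does work out as you anticipate.
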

\begin{re}
For the case of $q=1$, we have, for a.e. $ s\in[0,T]$, 
\begin{eqnarray*}
\frac{d}{ds} u_s^\sharp 
&\!=\!&
\frac{d}{ds} d_1^{*\sharp}[ \int_0^.\!\!\int_{r_1}^.\mathbb{E}(D_{r}\frac{d}{dr_1} u_{r_1}^\sharp  |\mathcal{F}_{r}) drdr_1
-  \int_0^.\!\!\int_{r}^.\mathbb{E}(D_{r_1}\frac{d}{dr}  u_{r}^\sharp|\mathcal{F}_{r_1}) dr_1dr]\\
& &+\,\mathbb{E}[D_s \int_0^T\langle\frac{d}{dr} u_r^\sharp, dB_r\rangle |\mathcal{F}_s] .
\end{eqnarray*}
\end{re}
\begin{cor}[Co-closed q-Forms]
\label{cor:dq-1*_kernel} 
$u\in\Ker(d_{q-1}^*)\!\!\implies\!\!u  = d_q^*[S_{q+1}(u)^\flat]$, where the skew-symmetric vector field $S_{q+1}(u)\in\dom(d_q^{*\sharp})$ is given by 
(\ref{eq:S_q+1}).  That is, any $u\in\Ker(d_{q-1}^*)$ can be expressed as 
\begin{eqnarray*}
\partial^q_{s_1, \cdots,s_q}u^\sharp
&=&\partial^q_{s_1, \cdots,s_q}[d_q^{*\sharp}S_{q+1}(u)] \\
&=&
\int_{\bar{s}}^T\!\!\!{}_{(1)}\!\langle \mathbb{E}(D_r\partial^q_{s_1, \cdots, s_q} u^\sharp | \mathcal{F}_{r}),dB_{r}\rangle_{\mathbb{R}^{m}}\notag \\
& &  
- \sum_{j=1}^q \mathbf{1}_{s_j=\bar{s}}\int_0^{s_j}\!\!\!\!{}_{(j+1)}\!\langle 
 \mathbb{E}(D_{s_j}\partial^q_{s_1, \cdots,s_{j-1}, r,s_{j+1}, \cdots, s_q}u^\sharp|\mathcal{F}_{s_j}),dB_{r}\rangle_{\mathbb{R}^{m}}.\notag
\end{eqnarray*}
\end{cor}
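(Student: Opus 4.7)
The first half of the statement---the implication $u \in \Ker(d_{q-1}^*) \Rightarrow u = d_q^*[S_{q+1}(u)^\flat]$---I would derive as a direct specialisation of Theorem \ref{th:CO_q*}. Substituting $d_{q-1}^* u = 0$ into (\ref{eq:CO_q*}) annihilates the second sum on the right-hand side and leaves $\partial^q_{s_1, \ldots, s_q} u^\sharp = \partial^q_{s_1, \ldots, s_q}[d_q^{*\sharp} S_{q+1}(u)]$ for a.e.\ $(s_1, \ldots, s_q)$. Passing through the $\sharp$/$\flat$ correspondence together with $d_q^{*\sharp} = (d_q^*)^\sharp$ yields the claim; membership $S_{q+1}(u) \in \dom(d_q^{*\sharp})$ is already supplied by Theorem \ref{th:CO_q*}.

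The remaining work is to verify the second equality, i.e.\ to compute $\partial^q_{s_1, \ldots, s_q}[\delta S_{q+1}(u)]$ explicitly from (\ref{eq:S_q+1}). My plan is to exploit the identification $\otimes^{q+1} H \cong L^2([0,T]^{q+1}; \otimes^{q+1} \mathbb{R}^m)$, under which the $q+1$ indefinite integrals $\int_0^. \cdots \int_0^.$ in (\ref{eq:S_q+1}) encode the tensor kernel of $S_{q+1}(u)$ as the displayed integrand; applying $\delta$ then amounts to Skorohod-integrating the first slot (the $r$-variable against $dB_r$) and specialising $r_i = s_i$ in the remaining $q$ slots. The integrand splits into the two pieces shown in (\ref{eq:S_q+1}), producing the two terms of the final formula.

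For the first piece, the conditioning $\mathbb{E}[\cdot | \mathcal{F}_r]$ makes the integrand adapted in $r$, so the Skorohod integral collapses to an It\^o integral; the indicator $\mathbf{1}_{(\bar{r}, T]}(r)$ specialises to $\mathbf{1}_{r > \bar{s}}$ and restricts the range to $(\bar{s}, T]$, giving exactly $\int_{\bar{s}}^T {}_{(1)}\langle \mathbb{E}(D_r \partial^q_{s_1, \ldots, s_q} u^\sharp | \mathcal{F}_r), dB_r \rangle_{\mathbb{R}^m}$. For each summand of the second piece, the transposition $\tau_{1, j+1}$ means that the $r$-slot sits in position $j+1$ of the un-transposed tensor $D_{s_j} \partial^q_{s_1, \ldots, s_{j-1}, r, s_{j+1}, \ldots, s_q} u^\sharp$, so the Skorohod contraction reads ${}_{(j+1)}\langle \cdot, dB_r \rangle_{\mathbb{R}^m}$ rather than ${}_{(1)}\langle \cdot, dB_r \rangle_{\mathbb{R}^m}$; the specialised indicator $\mathbf{1}_{s_j = \max(r, \bar{s})} = \mathbf{1}_{s_j = \bar{s}} \mathbf{1}_{r \leq s_j}$ restricts the $r$-range to $[0, s_j]$; and since the conditioning is on $\mathcal{F}_{s_j}$ rather than on $\mathcal{F}_r$, the integral remains a genuine Skorohod integral (not It\^o). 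Summing over $j$ with the minus sign inherited from (\ref{eq:S_q+1}) reproduces the displayed second term.

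The conceptual content is light, and the main obstacle is purely the bookkeeping: carefully tracking the transpositions $\tau_{1, j+1}$, the supports of the indicators, and the permutation of the remaining tensor slots after each contraction---in particular, verifying that the skew-symmetry of $u^\sharp$ correctly absorbs the $(-1)^{j-1}$ factors produced by cycling the contracted slot back into its canonical position so that both sides agree as kernels evaluated at $(s_1, \ldots, s_q)$. A subsidiary technical step is justifying that $\delta$ commutes with the iterated $\int_0^. \cdots \int_0^.$ and the conditional expectations in (\ref{eq:S_q+1}), but this is routine once $S_{q+1}(u) \in \dom(d_q^{*\sharp})$, which is precisely what Theorem \ref{th:CO_q*} already asserts.
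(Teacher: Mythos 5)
Your proposal is correct and follows essentially the same route as the paper: the implication is obtained by setting $d_{q-1}^*u=0$ in (\ref{eq:CO_q*}), and the explicit formula is exactly the computation of $\partial^q_{s_1,\cdots,s_q}[d_q^{*\sharp}S_{q+1}(u)]$ carried out in (\ref{eq:CO_qform*_1}) within the proof of Theorem \ref{th:CO_q*}, including the specialisation of the indicators, the adaptedness of the first piece, and the shift of the contraction index from $(1)$ to $(j+1)$ via $\tau_{1,j+1}$. The paper additionally notes an alternative duality argument via Theorem \ref{th:CO_q}, but its primary justification coincides with yours.
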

\begin{re}
For the case of $q=1$, we have the following representation for `divergence-free' vector fields: i.e., $\Div(u) =0$ implies that,  for a.e. $ s\in[0,T]$, 
\[
\frac{d}{ds} u_s^\sharp 
= \int_s^T\!\!\!{}_{(1)}\!\!<\mathbb{E}(D_r\frac{d}{ds} u_s^\sharp  |\mathcal{F}_r),dB_r>_{\mathbb{R}^m}
- \int_0^s\!\!\!\!{}_{(2)}\!\!<\mathbb{E}(D_s\frac{d}{dr} u_r^\sharp  |\mathcal{F}_s),dB_r>_{\mathbb{R}^m}.
\]
\end{re}
\begin{re}
Corollary \ref{cor:dq-1*_kernel} follows directly from Theorem \ref{th:CO_q*}.  It can also be taken as a consequence of Theorem \ref{th:CO_q} by a duality argument:  given any $v\in \dom(d_{q})$,
we have 
\[
\mathbb{E}\langle u^{\sharp},d_{q-1}^{\sharp}T_{q-1}(v)\rangle
= \mathbb{E} \langle (d_{q-1}^{*}u)^{\sharp},T_{q-1}(v)\rangle= 0,
\] 
so applying (\ref{eq:CO_q}) to $v$, we obtain 
$u^{\sharp} = d_q^{*\sharp}S_{q+1}(u)$ since 
\begin{eqnarray*}
& &\mathbb{E}\langle u^{\sharp}, v^{\sharp} \rangle_{\wedge^q H}  \\
&=\!\!&\mathbb{E} \langle u^{\sharp}, \int_0^.\!\!\cdots\!\! \int_0^.\int_{\bar{s}}^T\!\!\!{}_{(1)}\!\langle \mathbb{E}[\partial^{q+1}_{r, s_1, \cdots, s_q}(d_q v)^{\sharp}   |\mathcal{F}_r], dB_r\rangle_{\mathbb{R}^{m}} ds_1\cdots ds_q \rangle_{\wedge^q H}  \\ 
&=\!\!&\mathbb{E} \int_0^T\!\!\cdots\!\! \int_0^T\!\!\langle  \mathbf{1}_{(\bar{s}, T]}(r)\mathbb{E}[D_r( \partial^q_{s_1, \cdots, s_q} u^{\sharp})|\mathcal{F}_r],\partial^{q+1}_{r, s_1, \cdots, s_q} (d_q v)^{\sharp}  \rangle
dr  ds_1\cdots ds_q\\
&=\!\!&\mathbb{E} \langle  S_{q+1}(u) , (d_q v)^{\sharp}  \rangle_{\wedge^{(q+1)} H} .
\end{eqnarray*}
\end{re}
\begin{cor}
$\image(d_q^*)= \Ker(d_{q-1}^*)$, so the image of $d_q^*$ is closed. 
\end{cor}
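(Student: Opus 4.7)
The strategy is to get both inclusions essentially for free from what has already been established, and then invoke a standard closed-operator fact for the closedness assertion.

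\emph{Inclusion $\image(d_q^*)\subseteq \Ker(d_{q-1}^*)$.} This is the easy direction: it is a direct consequence of the identity $d_{q-1}^*d_q^* = 0$ noted right after the definitions in (\ref{eq:dq}). Concretely, for any $\theta\in\dom(d_q^*)$, the element $d_q^*\theta$ lies in $\dom(d_{q-1}^*)$ and is annihilated by $d_{q-1}^*$, so $d_q^*\theta\in\Ker(d_{q-1}^*)$.

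\emph{Inclusion $\Ker(d_{q-1}^*)\subseteq \image(d_q^*)$.} This is precisely the content of Corollary~\ref{cor:dq-1*_kernel}: given $u\in\Ker(d_{q-1}^*)$, the skew-symmetric $(q{+}1)$-vector-field $S_{q+1}(u)$ defined in (\ref{eq:S_q+1}) belongs to $\dom(d_q^{*\sharp})$, and $u^\sharp=d_q^{*\sharp}S_{q+1}(u)$, i.e.\ $u=d_q^*[S_{q+1}(u)^\flat]$, exhibiting $u$ as an element of $\image(d_q^*)$.

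\emph{Closedness.} With the two inclusions established, $\image(d_q^*)=\Ker(d_{q-1}^*)$, and the right-hand side is closed in $L^2\Gamma(\wedge^q H)^*$ because $d_{q-1}^*$ is a closed operator; indeed, being the adjoint $D^*$-type operator of the closable exterior derivative $d_{q-1}$ (cf.\ (\ref{eq:dq})), it is automatically closed, and the kernel of any closed operator is closed in the ambient Hilbert space. This gives the closedness of $\image(d_q^*)$.

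\emph{Main obstacle.} There is essentially no new obstacle here: the heavy lifting was done in proving Theorem~\ref{th:CO_q*} and its corollary, where the explicit antiderivative $S_{q+1}(u)$ was constructed. The only point requiring a brief verification is that $d_{q-1}^*$ is genuinely closed on its natural $L^2$-domain (so that $\Ker(d_{q-1}^*)$ is closed), but this is standard since $d_{q-1}^*$ is defined as the Hilbert-space adjoint of a densely defined operator.
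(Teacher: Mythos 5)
Your proposal is correct and follows exactly the route the paper intends: the inclusion $\image(d_q^*)\subseteq\Ker(d_{q-1}^*)$ from the identity $d_{q-1}^*d_q^*=0$, the reverse inclusion from Corollary \ref{cor:dq-1*_kernel} (the explicit representation $u=d_q^*[S_{q+1}(u)^\flat]$), and closedness from the fact that the kernel of the closed operator $d_{q-1}^*$ (an adjoint of a densely defined operator) is closed. The paper states this corollary without further proof precisely because it is this immediate consequence.
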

\begin{re}
\label{re:P_V}
As mentioned in Section \ref{sec:notation}, the integrand in the Clark-Ocone formula (\ref{eq:CO_0}) can be regarded as the projection of $\nabla F$ onto the space of adapted processes. A similar interpretation applies to our generalised Clark-Ocone formulae for higher order forms. We define a subspace of $L^2\Gamma(\otimes^q H)$
\[
V^{(q)}=\{u\in L^2\Gamma(\otimes^q H): u_{s_1, \cdots, s_q}\in \mathcal{F}_{\bar{s}},  \mbox{ a.e. } s_1, \cdots, s_q\in [0,T]\}.
\]
Let $P_{V^{(q)}}$ be the projection onto $V^{(q)}$ defined by 
\begin{eqnarray*}
P_{V^{(q)}} u
&=& \int_0^.\cdots \int_0^. \mathbb{E}(\partial^q_{s_1, \cdots, s_q}u  |\mathcal{F}_{\bar{s}})ds_1\cdots ds_q\\
&=& \sum_{j=1}^{q} \int_0^.\cdots \int_0^.\mathbf{1}_{s_j=\bar{s}}\,\mathbb{E}(\partial^q_{s_1, \cdots, s_q}u |\mathcal{F}_{s_j})ds_1\cdots ds_q, 
\end{eqnarray*}
and $P^j_{V^{(q)}}$ the $j$-th term in the above sum, i.e., for $j=1$ to $q$, 
\[
P^j_{V^{(q)}} u = \int_0^.\cdots \int_0^.\mathbf{1}_{s_j=\bar{s}}\,\mathbb{E}(\partial^q_{s_1, \cdots, s_q}u |\mathcal{F}_{s_j})ds_1\cdots ds_q. 
\]
Now we can state the generalised Clark-Ocone formulae (\ref{eq:CO_q}) and (\ref{eq:CO_q*}) as
\begin{equation}
\label{eq:CO_q_P_V}
u^\sharp  = d_{q-1}^\sharp\delta P^1_{V^{(q)}}u^\sharp+\delta P^{1}_{V^{(q+1)}}(d_qu)^\sharp,
\end{equation}
and 
\begin{equation}
\label{eq:CO_q*_P_V}
u^\sharp =d_q^{*\sharp}A_{q+1}P^{1}_{V^{(q+1)}}\nabla u^\sharp + A_qP^1_{V^{(q)}}\nabla (d_{q-1}^* u)^\sharp.
\end{equation}
Wu's formulation (\ref{eq:CO_0_Wu}) can be regarded as the special case of (\ref{eq:CO_q_P_V}) for $q=0$.  
We note that $\delta$ is injective on the image of $P_{V^{(q)}}^1$ for any $q\in \mathbb{N}$, where it coincides with the standard It\^o integral. 
\end{re}
\begin{re}
\label{re:iff}
It is interesting to observe from (\ref{eq:CO_q})  that the apparently weaker condition 
\[
\mathbf{1}_{(\bar{s}, T]}(r)\mathbb{E}[\partial^{q+1}_{r, s_1, \cdots, s_q}(d_q u)^\sharp   |\mathcal{F}_r] = 0, \quad\mbox{ a.e. }r, s_1, \cdots, s_q\in[0,T]
\]
is actually equivalent to the apparently stronger condition $d_q u=0$.   In fact, by skew-symmetry we also have
\[
\mathbb{E}[\partial^{q+1}_{r, s_1, \cdots, s_q}(d_q u)^\sharp   |\mathcal{F}_{\max(r, \bar{s})}] = 0\iff
d_q u=0, \quad\mbox{ a.e. }r, s_1, \cdots, s_q\in[0,T].
\]
This is in line with the situation for $q=0$, where the formula (\ref{eq:CO_0}) implies 
\[
\nabla F=0 \iff F=\textup{constant} \iff \mathbb{E}(D_rF  |\mathcal{F}_r)=0, \mbox{ a.e. }r \in[0,T].
\]
\end{re}
\begin{re} 
\label{re:iff_P_V}
We can restate Remark \ref{re:iff} in the notation of Remark \ref{re:P_V}: for $q\in\mathbb{N}\cup\{0\}$, $u\in\dom(d_q)$, and any integer $j$ between $1$ and $q$,  
\[
P^j_{V^{(q+1)}}(d_q u)^\sharp=0 \iff 
P_{V^{(q+1)}}(d_q u)^\sharp =0 \iff 
d_q u=0.
\]
\end{re}
\begin{re} 
\label{re:unique}
Remark \ref{re:iff_P_V} implies
\[
\Ker (P_{V^{(q+1)}}) \cap \image(d_q^\sharp)=\{0\}.
\]
It is easy to check that the second term on the right-hand side of (\ref{eq:CO_q_P_V}) lies in $\Ker (P_{V^{(q)}})$. 
Therefore, the expression (\ref{eq:CO_q_P_V}) actually gives a unique decomposition of a $q$-$H$-form $u$ in the form of
\begin{equation}
\label{eq:unique_q}
u= d_{q-1}v + w, \quad w^\sharp\in \Ker(P_{V^{(q)}}).
\end{equation}
Since $w^\sharp$ is expressed as an It\^o integral, the integrand is also uniquely given.
The fact that the Clark-Ocone formula  (\ref{eq:CO_0}) gives a unique representation of the function $F$ as the sum of a constant and an It\^o integral can be seen as the special case for $q=0$.  
\end{re}
\begin{re} 
By duality, we also have, cf. Remark \ref{re:unique}, 
\[
V^{(q)} \cap \image(d_q^{*\sharp})=\{0\}.
\]
Similarly to (\ref{eq:unique_q}), we obtain from (\ref{eq:CO_q*_P_V}) another unique decomposition of a $q$-$H$-form $u$: 
\begin{equation}
\label{eq:unique_q*}
u= d_{q}^*v + w, \quad w^\sharp\in V^{(q)}.
\end{equation}
If we take $P_{V^{(0)}}$ as the projection onto constants (by taking expectation),  decomposition (\ref{eq:unique_q*}) also incorporates the Clark-Ocone formula  (\ref{eq:CO_0}) as the special case for $q=0$. 
We cannot, however, say much about the uniqueness of the integrand in general, as Skorohod integrals are involved for $q>0$.
\end{re}
\begin{re}
It is worth pointing out that the two terms in the equation (\ref{eq:CO_q}) are not orthogonal to each other in general; similarly for (\ref{eq:CO_q*}).  These generalised Clark-Ocone formulae do not give an explicit Hodge decomposition in the form of 
\begin{equation}
\label{eq:Hodge}
\phi= d_{q-1}\psi + d_q^*\theta + h,
\end{equation}
where the harmonic component $h=0$.  
\end{re}
The main ingredient in the proofs of Theorems \ref{th:CO_q} and \ref{th:CO_q*} is the well-known commutation relationship between the derivative and divergence operators, which can be most concisely expressed in the form of a Heisenberg commutation relationship as 
$[\nabla, \delta]=\id_H$. 
Or, as Nualart \cite{nualart2006malliavin} (Proposition 1.3.8) puts it,  for any $u\in \mathbb{D}^{2,1}$ such that $\tau_{1,2} \nabla u \in \dom(\delta)$, 
\begin{equation}
\label{eq:commutation}
D_t\delta u = \frac{d}{dt}u_t + \int_0^T\!\!\!{}_{(2)}\!\langle D_t \frac{d}{ds} u_s, dB_s\rangle_{\mathbb{R}^{m}}. 
\end{equation}
This generalises to any vector field $u\in\mathbb{D}^{2,1}\Gamma(\otimes^n H)$ satisfying the same condition that $\tau_{1, 2} \nabla u\in \dom(\delta)$, and we have
\begin{eqnarray}
\label{eq:commutation_tensor}
& &D_{t}\!\!\int_0^T \!\!\!\!{}_{(1)}\!\langle  \partial^q_{r, {s_1}, \cdots, s_{n-1}}u ,   dB_{r}\rangle_{\mathbb{R}^m} \notag\\
&=& \partial^q_{t, s_1, \cdots,s_{n-1}} u +
\int_0^{T}\!\!\!\!{}_{(2)}\!\langle D_{t} \partial^q_{r, s_1, \cdots,s_{n-1}} u, dB_r\rangle_{\mathbb{R}^{m}}.
\end{eqnarray}
\par
We state some useful consequences of the commutation formula.  First, since $d_q^{*\sharp}=\delta$ on skew-symmetric tensor fields, we apply (\ref{eq:commutation_tensor}) to arrive at 
\begin{lem}[Commutation Formula for $d_q^*$]
\label{lem:commutation_dq*}
Suppose $u\in\mathbb{D}^{2,1}\Gamma(\wedge^q H)$ and 
$\nabla u\in\dom(\delta)$.  Then we have $d_{q-1}^* u \in\mathbb{D}^{2,1}$, and 
\[
\nabla d_{q-1}^{*\sharp} u= u+ \delta (\tau_{1,2}\nabla u).
\]
\end{lem}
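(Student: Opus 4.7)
The plan is to reduce the lemma directly to the tensor commutation formula \eqref{eq:commutation_tensor}, applied to $u$ viewed as an element of $\mathbb{D}^{2,1}\Gamma(\otimes^q H)$. This is natural because, as noted right after \eqref{eq:dq}, we have $d_{q-1}^{*\sharp} = \delta|_{L^2\Gamma(\wedge^q H)}$, so $d_{q-1}^{*\sharp}u = \delta u$ and the claimed identity becomes a statement about $\nabla \delta u$ disguised by the $\sharp$-notation.

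First, I would apply \eqref{eq:commutation_tensor} with $n = q$. The left-hand side is $D_t \int_0^T {}_{(1)}\!\langle \partial^q_{r, s_1, \ldots, s_{q-1}} u, dB_r\rangle_{\mathbb{R}^m}$, which, after performing the Skorohod integration, equals $D_t \partial^{q-1}_{s_1, \ldots, s_{q-1}} d_{q-1}^{*\sharp} u$. Collecting over all $t$, $s_1, \ldots, s_{q-1}$ this is precisely $\partial^q_{t, s_1, \ldots, s_{q-1}}(\nabla d_{q-1}^{*\sharp} u)$. The hypothesis $\nabla u \in \dom(\delta)$ guarantees that both terms on the right-hand side of \eqref{eq:commutation_tensor} lie in $L^2$, and hence that $d_{q-1}^{*\sharp} u \in \mathbb{D}^{2,1}$, which is the first assertion of the lemma.

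Second, I would identify the two terms on the right-hand side of \eqref{eq:commutation_tensor}. The term $\partial^q_{t, s_1, \ldots, s_{q-1}} u$ is simply the component representation of $u$ itself. The remaining piece is
\[
\int_0^T {}_{(2)}\!\langle D_t \partial^q_{r, s_1, \ldots, s_{q-1}} u, dB_r\rangle_{\mathbb{R}^m},
\]
which I claim equals $\partial^q_{t, s_1, \ldots, s_{q-1}} \delta(\tau_{1,2}\nabla u)$. Unwinding: the components of $\nabla u$ are $(\nabla u)_{r, t, s_1, \ldots, s_{q-1}} = D_r \partial^q_{t, s_1, \ldots, s_{q-1}} u$, with slot one being the gradient direction. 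Applying $\tau_{1,2}$ swaps it with the original first slot of $u$, producing $D_t \partial^q_{r, s_1, \ldots, s_{q-1}} u$ indexed so that the $r$-direction is now slot one. Since $\delta$ contracts this slot via Skorohod integration against $dB_r$, the outcome is exactly the displayed integral, with the ${}_{(1)}$ versus ${}_{(2)}$ discrepancy between Skorohod's convention and the interior product notation explained precisely by the transposition $\tau_{1,2}$.

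The essential difficulty is nothing more than careful index bookkeeping, namely tracking which tensor slot of $\nabla u$ corresponds to the gradient direction versus the original slots of $u$, and matching this against both the ${}_{(i)}\!\langle \cdot, \cdot\rangle_H$ convention and the convention that $\delta$ acts on the first $H$-factor. Once these identifications are made, the lemma is a direct transcription of \eqref{eq:commutation_tensor}, and no further analytic input is needed beyond the standing hypothesis $\nabla u \in \dom(\delta)$. Note in particular that skew-symmetry of $u$ is used only to identify $\delta u$ with $d_{q-1}^{*\sharp} u$; the commutation step itself does not require it.
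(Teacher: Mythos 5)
Your proposal is correct and follows the paper's own route: the paper obtains this lemma by exactly the observation you make, namely that $d_{q-1}^{*\sharp}=\delta$ on skew-symmetric fields and that (\ref{eq:commutation_tensor}) with $n=q$ then reads as the claimed identity once the ${}_{(2)}$-contraction is recognised as $\delta\circ\tau_{1,2}$ applied to $\nabla u$. The only caveat, which you inherit from the paper itself, is that the stated hypothesis $\nabla u\in\dom(\delta)$ is really used in the form $\tau_{1,2}\nabla u\in\dom(\delta)$, the condition under which (\ref{eq:commutation_tensor}) is asserted.
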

\begin{lem}[Commutation Formula for $d_q$]
\label{lem:commutation_dq}
Suppose $u\in\mathbb{D}^{2,1}\Gamma(\wedge^q H)$ satisfies 
$\tau_{1,j+1}\nabla u \in \dom(\delta)$, for all $j=1$ to $q$. Then $\delta u \in \dom(d_{q-1}^{\sharp})$, and 
\[
 d_{q-1}^{\sharp}\delta u= q \,  u+ \sum_{j=1}^{q} \delta (\tau_{1, j+1}\nabla u). 
\]
\end{lem}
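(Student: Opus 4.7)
The plan is to reduce the claim directly to the tensor-valued commutation formula (\ref{eq:commutation_tensor}), using the identity $d_{q-1}^{\sharp} = q\,A_q\nabla$ recorded earlier, which gives
\[
d_{q-1}^{\sharp}\delta u \;=\; q\,A_q(\nabla \delta u).
\]
Thus everything reduces to evaluating the skew-symmetrisation of $\nabla\delta u$.

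First I would invoke the tensor commutation identity (\ref{eq:commutation_tensor}) applied slotwise in the $q-1$ free indices of $\delta u$. Under the hypothesis $\tau_{1,2}\nabla u \in \dom(\delta)$ (which is the $j=1$ case of the assumption), this is exactly Lemma \ref{lem:commutation_dq*} and yields
\[
\nabla\delta u \;=\; u + \delta(\tau_{1,2}\nabla u).
\]
Applying $q\,A_q$ to both sides, and using $A_q u = u$ because $u$ is already skew-symmetric in its $q$ arguments, one obtains
\[
d_{q-1}^{\sharp}\delta u \;=\; q\,u \;+\; q\,A_q\!\left[\delta(\tau_{1,2}\nabla u)\right].
\]

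The remaining ingredient is the purely combinatorial identity
\[
q\,A_q\!\left[\delta(\tau_{1,2}\nabla u)\right] \;=\; \sum_{j=1}^{q}\delta(\tau_{1,j+1}\nabla u).
\]
Expanding the definition of $A_q$ yields a sum over $\mathfrak{S}_q$; I would group the $q!$ permutations according to the value $j = \rho(1)$, producing $(q-1)!$ permutations per $j$ with a common sign $(-1)^{j-1}$. For each fixed $j$, the inner sign-weighted sum over the residual permutations collapses to a single term by the skew-symmetry of $u$ in its $q$ arguments, and a second use of skew-symmetry, pushing the dummy Skorohod variable from the first slot of $u$ into its $j$-th slot, supplies a further $(-1)^{j-1}$ that cancels the first. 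The surviving expression for each $j$ is precisely $\delta(\tau_{1,j+1}\nabla u)$, in the normalisation already used on the right of (\ref{eq:CWS_dq_C}). The claim $\delta u \in \dom(d_{q-1}^{\sharp})$ then follows, since each summand on the right lies in $L^2\Gamma(\wedge^q H)$ by the hypothesis $\tau_{1,j+1}\nabla u \in \dom(\delta)$.

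The main obstacle is bookkeeping in this last step: tracking three sources of signs---the permutation parity, the skew-symmetry of $u$, and the reindexing associated with $\tau_{1,j+1}$---and verifying that they combine to leave exactly the asserted sum. No further analysis is needed beyond the tensor commutation formula and the skew-symmetry inherent in $\wedge^q H$.
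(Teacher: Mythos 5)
Your proposal is correct and rests on exactly the same two ingredients as the paper's proof --- the commutation relation (\ref{eq:commutation}) (via Lemma \ref{lem:commutation_dq*}) and the sign bookkeeping of (\ref{eq:CWS_dq_B}) --- only with the order of operations swapped: the paper distributes the alternation first and then commutes $D_{s_j}$ with the integral term by term, while you commute once and then apply $qA_q$, the two $(-1)^{j-1}$ factors cancelling identically in both versions. This is essentially the same argument, and your combinatorial identity $qA_q[\delta(\tau_{1,2}\nabla u)]=\sum_{j=1}^q\delta(\tau_{1,j+1}\nabla u)$ checks out.
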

\begin{proof}
This follows from (\ref{eq:CWS_dq_B}) and (\ref{eq:commutation}) since, for a.e. $s_1, \cdots, s_q \in[0,T]$, 
\begin{eqnarray*}
& &  
 \partial^q_{s_1, \cdots, s_q} [d_{q-1}^{\sharp}\int_0^.\cdots \int_0^.\int_0^T \!\!\!{}_{(1)}\!\langle \partial r^q_{r_1, \cdots, r_q} u ,dB_{r}\rangle_{\mathbb{R}^{m}} dr]\notag\\
&=& 
\sum_{j=1}^q (-1)^{j-1}D_{s_j}[\int_0^T \!\!\!{}_{(1)}\!\langle \partial^q_{r, s_1, \cdots,\hat{s}_{j}, \cdots, s_q} u ,dB_{r}\rangle_{\mathbb{R}^{m}} ]\notag\\
&=& 
\sum_{j=1}^qD_{s_j}[\int_0^T \!\!\!{}_{(j)}\!\langle \partial^q_{s_1, \cdots,s_{j-1}, r,s_{j+1}, \cdots, s_q} u ,dB_{r}\rangle_{\mathbb{R}^{m}} ]\notag\\
&=& 
q\, \partial^q_{s_1, \cdots, s_q} u +
\sum_{j=1}^{q} \int_0^T \!\!\!{}_{(j+1)}\!\langle D_{s_j} \partial^q_{s_1, \cdots,s_{j-1}, r,s_{j+1}, \cdots, s_q} u,dB_{r}\rangle_{\mathbb{R}^{m}}. \qedhere
\end{eqnarray*}
\end{proof}
\begin{re}
For $q=1$, both Lemmas \ref{lem:commutation_dq} and \ref{lem:commutation_dq*} reduce to (\ref{eq:commutation}).
\end{re}
\section{Proofs}
\label{sec:proof}
\begin{proof}[\textbf{Proof of Theorem \ref{th:CO_q}}]
We first prove the result for $u\in \mathbb{D}^{2,1}\Gamma(\wedge^q H)^*$, and then use an approximation argument to extend to a general $u\in \dom(d_q)$.  
\par
We apply the Clark-Ocone formula to write, for a.e. $s_1, \cdots, s_q\in[0,T]$,  
\[
\partial^q_{s_1, \cdots, s_q} u^\sharp 
= \mathbb{E}(\partial^q_{s_1, \cdots, s_q} u^\sharp )
+ \int_0^T\!\!\!{}_{(1)}\!\langle \mathbb{E}[D_r\partial^q_{s_1, \cdots, s_q} u^\sharp | \mathcal{F}_r], dB_r\rangle_{\mathbb{R}^{m}}.
\]
Taking conditional expectation with respect to $\mathcal{F}_{\bar{s}}$, we see
\begin{equation}
\label{eq:conditioned_CO_qform}
\mathbb{E}(\partial^q_{s_1, \cdots, s_q} u^\sharp | \mathcal{F}_{\bar{s}})
= \mathbb{E}(\partial^q_{s_1, \cdots, s_q} u^\sharp )
+ \int_0^{\bar{s}}\!\!\!\!\!\!{}_{(1)}\!\langle \mathbb{E}[D_r\partial^q_{s_1, \cdots, s_q} u^\sharp | \mathcal{F}_r], dB_r\rangle_{\mathbb{R}^{m}},
\end{equation}
hence
\[
\partial^q_{s_1, \cdots, s_q} u^\sharp = \mathbb{E}(\partial^q_{s_1, \cdots, s_q} u^\sharp | \mathcal{F}_{\bar{s}})
+ \int_{\bar{s}}^T\!\!\!{}_{(1)}\!\langle \mathbb{E}[D_r\partial^q_{s_1, \cdots, s_q} u^\sharp | \mathcal{F}_r], dB_r\rangle_{\mathbb{R}^{m}}.
\]
Lemma 2.4 of Nualart and Pardoux \cite{nualart1988stochastic} shows that the conditional expectation of a $\mathbb{D}^{2,1}$ process is again in $\mathbb{D}^{2,1}$.  Applying their result to $u\in\mathbb{D}^{2,1}$ in our case, we have 
$\mathbb{E}(\partial^q_{s_1, \cdots,s_{j-1}, r,s_{j+1}, \cdots, s_q} u^\sharp |\mathcal{F}_{r})\in\mathbb{D}^{2,1},$ 
and almost surely
\[
D_{s_j}\mathbb{E}(\partial^q_{s_1, \cdots,s_{j-1}, r,s_{j+1}, \cdots, s_q} u^\sharp |\mathcal{F}_{r})
= \mathbb{E}(D_{s_j}\partial^q_{s_1, \cdots,s_{j-1}, r,s_{j+1}, \cdots, s_q} u^\sharp |\mathcal{F}_{r})\mathbf{1}_{(s_j, T]}(r).
\]
Therefore, the process 
\[
\int_0^.D_{s_j} \mathbb{E}(\partial^q_{s_1, \cdots,s_{j-1}, r,s_{j+1}, \cdots, s_q} u^\sharp |\mathcal{F}_{r})dr
\] 
is adapted, hence It\^o-integrable. 
A calculation similar to that in the proof of Lemma \ref{lem:commutation_dq} shows that $T_{q-1}(u)\in \dom(d_{q-1}^{\sharp})$, and for a.e. $s_1, \cdots, s_q\in[0,T]$, 
\begin{eqnarray}
\label{eq:CWS_conditioned_CO_qform1}
& & 
\partial^q_{s_1, \cdots, s_{q}}[d_{q-1}^{\sharp} T_{q-1}(u)]\\
&=& 
\sum_{j=1}^q \int_{\max_{i=1, i\neq j}^{q} s_i}^T \!\!\!{}_{(j+1)}\!\langle D_{s_j} \mathbb{E}(\partial^q_{s_1, \cdots,s_{j-1}, r,s_{j+1}, \cdots, s_q} u^\sharp |\mathcal{F}_{r}),dB_{r}\rangle_{\mathbb{R}^{m}}\notag\\
& & 
+ \sum_{j=1}^q \mathbb{E}(\partial^q_{s_1, \cdots, s_q} u^\sharp | \mathcal{F}_{s_j}) \mathbf{1}_{(\max_{i=1, i\neq j}^{q} s_i, T]}(s_j) \notag\\
&=&
\sum_{j=1}^q \int_{\bar{s}}^T \!\!\!{}_{(j+1)}\!\langle \mathbb{E}(D_{s_j} \partial^q_{s_1, \cdots,s_{j-1}, r,s_{j+1}, \cdots, s_q} u^\sharp|\mathcal{F}_{r}),dB_{r}\rangle_{\mathbb{R}^{m}}  \notag\\
& &
+ \mathbb{E}(\partial^q_{s_1, \cdots, s_q} u^\sharp | \mathcal{F}_{\bar{s}}). \notag
\end{eqnarray} 
Subtracting (\ref{eq:CWS_conditioned_CO_qform1}) from (\ref{eq:conditioned_CO_qform}) and making use of equation (\ref{eq:CWS_dq_C}), we obtain 
\begin{eqnarray*}
& &
\partial^q_{s_1, \cdots, s_q} u^\sharp 
- \partial^q_{s_1, \cdots, s_{q}}[d_{q-1}^{\sharp}T_{q-1}(u)] \\
&=& 
\int_{\bar{s}}^T\!\!\!{}_{(1)}\!\langle \mathbb{E}(D_r\partial^q_{s_1, \cdots, s_q} u^\sharp | \mathcal{F}_r), dB_r\rangle_{\mathbb{R}^{m}} \\
& & 
- \sum_{j=1}^q \int_{\bar{s}}^T\!\!\!{}_{(j+1)}\!\langle  \mathbb{E}(D_{s_j} \partial^q_{s_1, \cdots,s_{j-1}, r,s_{j+1}, \cdots, s_q} u^\sharp|\mathcal{F}_{r}),dB_{r}\rangle_{\mathbb{R}^{m}}\\
&=& 
\int_{\bar{s}}^T\!\!{}_{(1)}\!\langle \mathbb{E}[\partial^{q+1}_{r, s_1, \cdots, s_q} (d_q u)^\sharp   |\mathcal{F}_r], dB_r\rangle_{\mathbb{R}^{m}}, 
\end{eqnarray*}
so (\ref{eq:CO_q}) holds for the special case of $u \in \mathbb{D}^{2,1}\Gamma(\wedge^q H)^*$. 
It is also clear from the above calculation that $d_{q-1}^\sharp T_{q-1}(u)\in \mathbb{D}^{2,k}\Gamma(\wedge^{(q-1)} H)$ if $u\in \mathbb{D}^{2,k}\Gamma(\wedge^q H)^*$; i.e.,  $T_{q-1}(u)\in \mathbb{D}^{2,k+1}\Gamma(\wedge^{(q-1)} H)$ if $u\in \mathbb{D}^{2,k}\Gamma(\wedge^q H)^*$. 
\par
A general $q$-form $u\in \dom(d_q)$ can be approximated by a sequence of 
$u_j\in\mathbb{D}^{2,1}$ such that $u_j\rightarrow u$ and $d_qu_j\rightarrow d_qu$ in $L^2$.  The above computation shows
\begin{eqnarray*}
& &
\partial^q_{s_1, \cdots, s_q}[d_{q-1}^{\sharp}T_{q-1}(u_j)] \\
\!\!&=&\!\! \partial^q_{s_1, \cdots, s_q}u_j^\sharp
 - 
\int_{\bar{s}}^T\!\!\!{}_{(1)}\!\langle \mathbb{E}[\partial^q_{r, s_1, \cdots, s_q} (d_q u_j)^\sharp|\mathcal{F}_r], dB_r\rangle_{\mathbb{R}^{m}}\\
\!\!&\rightarrow &\!\!
\partial^q_{s_1, \cdots, s_q} u^\sharp 
 - 
\int_{\bar{s}}^T\!\!\!{}_{(1)}\!\langle \mathbb{E}[\partial^{q+1}_{r, s_1, \cdots, s_q} (d_q u)^\sharp |\mathcal{F}_r], dB_r\rangle_{\mathbb{R}^{m}}.
\end{eqnarray*}
Since the map $T_{q-1}: L^2\Gamma(\wedge^q H) \rightarrow L^2\Gamma(\wedge^{q-1} H)$ 
is continuous, 
we also have $T_{q-1}(u_j)\rightarrow T_{q-1}(u)$. 
As $d_{q-1}$ is a closed operator, 
so is $d_{q-1}^{\sharp}$. Therefore, $T_{q-1}(u)\in\dom({d_{q-1}^{\sharp}})$ and (\ref{eq:CO_q}) holds for $u\in \dom(d_q)$.
\end{proof}
\begin{proof}[\textbf{Proof of Theorem \ref{th:CO_q*}}]
We first prove the result for $u \in \mathbb{D}^{2,2}\Gamma(\wedge^q H)^*$, and use an approximation argument to extend to a general $u\in \dom(d_{q-1}^*)$. 
\par
From the skew-symmetry of $u\in L^2\Gamma (\wedge^q H)^*$, we observe that $S_{q+1}(u)$ is the full skew-symmetrisation of the $(q+1)$-tensor 
\[
\int_0^.\!\!\cdots\!\!\int_0^.\mathbb{E}[\mathbf{1}_{(\max_{i=1}^{q} r_i, T]}(r)D_r\partial^q_{r_1, \cdots, r_q}u^\sharp|\mathcal{F}_{r}] \,dr dr_1\cdots dr_{q}, 
\]
so indeed $S_{q+1}(u)\in L^2\Gamma (\wedge^{(q+1)} H)$. 
We compute 
\begin{eqnarray}
\label{eq:CO_qform*_1}
& &\partial^q_{s_1, \cdots,s_q}[d_q^{*\sharp}S_{q+1}(u)] \\
&=& \int_0^T\!\!\!{}_{(1)}\!\langle \mathbb{E}[\mathbf{1}_{(\bar{s}, T]}(r)D_r\partial^q_{s_1, \cdots, s_q} u^\sharp |\mathcal{F}_{r}],dB_{r}\rangle_{\mathbb{R}^{m}} \notag\\
& &  
- \sum_{j=1}^q \mathbf{1}_{s_j=\bar{s}}\int_0^{ s_j}\!\!\!\!{}_{(1)}\!\langle 
\mathbb{E}[ \tau_{1,j+1}(D_{s_j}\partial^q_{s_1, \cdots,s_{j-1}, r,s_{j+1}, \cdots, s_q}u^\sharp)|\mathcal{F}_{s_j}],dB_{r}\rangle_{\mathbb{R}^{m}}\notag\\
&=& \int_{\bar{s}}^T\!\!\!{}_{(1)}\!\langle \mathbb{E}(D_r\partial^q_{s_1, \cdots, s_q} u^\sharp |\mathcal{F}_{r}),dB_{r}\rangle_{\mathbb{R}^{m}} \notag\\
& &  
- \sum_{j=1}^q \mathbf{1}_{s_j=\bar{s}}\int_0^{s_j}\!\!\!\!{}_{(j+1)}\!\langle 
\mathbb{E}(D_{s_j}\partial^q_{s_1, \cdots,s_{j-1}, r,s_{j+1}, \cdots, s_q}u^\sharp|\mathcal{F}_{s_j}),dB_{r}\rangle_{\mathbb{R}^{m}}.\notag
\end{eqnarray}
From our assumption $u \in \mathbb{D}^{2,2}\Gamma(\wedge^q H)^*$, we know that $u\in \dom(d_{q-1}^*)$ and $d_{q-1}^* u \in \mathbb{D}^{2,1}\Gamma(\wedge^{q-1} H)^*$.  Making use of skew-symmetry and Lemma \ref{lem:commutation_dq*}, we see
\begin{eqnarray}
\label{eq:CO_qform*_2}
& &  
\sum_{j=1}^q(-1)^{j-1}\mathbf{1}_{s_j=\bar{s}}\mathbb{E}[D_{s_j} \partial^{q-1}_{s_1, \cdots,\hat{s}_{j}, \cdots, s_q}(d_{q-1}^*u)^\sharp |\mathcal{F}_{s_j}]\\
&=&  
\sum_{j=1}^q(-1)^{j-1}\mathbf{1}_{s_j=\bar{s}}\mathbb{E}[D_{s_j} 
\int_0^T\!\!\!\!{}_{(1)}\!\langle \partial^q_{r,s_1, \cdots,\hat{s}_{j}, \cdots,s_q}u^\sharp,  dB_{r}\rangle_{\mathbb{R}^{m}} |\mathcal{F}_{s_j}]\notag\\
&=& 
\sum_{j=1}^q(-1)^{j-1}\mathbf{1}_{s_j=\bar{s}}\mathbb{E}[(-1)^{j-1}D_{s_j} 
\int_0^T\!\!\!\!{}_{(j)}\!\langle \partial^q_{s_1, \cdots,s_{j-1}, r,s_{j+1}, \cdots,s_q}u^\sharp, dB_{r}\rangle_{\mathbb{R}^{m}} |\mathcal{F}_{s_j}]\notag\\
&=& 
\sum_{j=1}^q\mathbf{1}_{s_j=\bar{s}}\mathbb{E}( \partial^q_{s_1, \cdots,s_q}u^\sharp+ 
\int_0^{T}\!\!\!\!{}_{(j+1)}\!\langle D_{s_j} \partial^q_{s_1, \cdots,s_{j-1}, r,s_{j+1}, \cdots, s_q} u^\sharp, dB_r\rangle_{\mathbb{R}^{m}}|\mathcal{F}_{s_j})\notag\\
&=& 
\mathbb{E}(  \partial^q_{s_1, \cdots,s_q}u^\sharp|\mathcal{F}_{\bar{s}})\notag\\
& &+ \sum_{j=1}^q\mathbf{1}_{s_j=\bar{s}}
\int_0^{s_j}\!\!\!\!{}_{(j+1)}\!\langle \mathbb{E}(D_{s_j} \partial^q_{s_1, \cdots,s_{j-1}, r,s_{j+1}, \cdots, s_q}u^\sharp|\mathcal{F}_{s_j}), dB_r\rangle_{\mathbb{R}^{m}}.\notag
\end{eqnarray}
Now summing up (\ref{eq:CO_qform*_1}) and (\ref{eq:CO_qform*_2}), we conclude, using the equality (\ref{eq:conditioned_CO_qform}), that 
\begin{eqnarray*}
& &
\partial^q_{s_1, \cdots,s_q} [d_q^{*\sharp} S_{q+1}(u)] + 
\sum_{j=1}^q(-1)^{j-1}\mathbf{1}_{s_j=\bar{s}}\mathbb{E}[D_{s_j} \partial^{q-1}_{s_1, \cdots,\hat{s}_{j}, \cdots, s_q} (d_{q-1}^*u )^\sharp  |\mathcal{F}_{s_j}]\\
&=&
\int_{\bar{s}}^T\!\!\!{}_{(1)}\!\langle \mathbb{E}(D_{r}\partial^q_{s_1, \cdots, s_q} u^\sharp |\mathcal{F}_{r}),dB_{r}\rangle_{\mathbb{R}^{m}} 
+
\, \mathbb{E}( \partial^q_{s_1, \cdots,s_q}u^\sharp|\mathcal{F}_{\bar{s}})\\
&=&\partial^q_{s_1, \cdots,s_q} u^\sharp. 
\end{eqnarray*}
This proves (\ref{eq:CO_q*}) for the case of $u\in \mathbb{D}^{2,2}\Gamma(\wedge^q H)^*$.
\par
For a general $u\in \dom(d_{q-1}^*)\subset L^2\Gamma(\wedge^q H)^*$, we can approximate by a sequence of cylindrical $u_j\in\mathbb{D}^{2,2}$ such that 
$u_j\rightarrow u$ and $d_{q-1}^* u_j\rightarrow d_{q-1}^* u$ in $L^2$.  Our earlier observation regarding the smoothing property of the projection onto the space of adapated processes
implies that 
\[
\mathbb{E}[D_{s_j} \partial^{q-1}_{s_1, \cdots,\hat{s}_{j}, \cdots, s_q}(d_{q-1}^*u_j )^\sharp |\mathcal{F}_{s_j}]
\rightarrow\mathbb{E}[D_{s_j} \partial^{q-1}_{s_1, \cdots,\hat{s}_{j}, \cdots, s_q} (d_{q-1}^*u )^\sharp  |\mathcal{F}_{s_j}]
\]
in $L^2$. 
The computation above shows that, in $L^2$,
\begin{eqnarray*}
& & \partial^q_{s_1, \cdots,s_q} [d_q^{*\sharp}S_{q+1}(u_j)]\\
&=&\partial^q_{s_1, \cdots, s_q}u_j^\sharp
- \sum_{j=1}^q(-1)^{j-1}\mathbf{1}_{s_j=\bar{s}}\mathbb{E}[D_{s_j} \partial^{q-1}_{s_1, \cdots,\hat{s}_{j}, \cdots, s_q} (d_{q-1}^*u_j)^\sharp|\mathcal{F}_{s_j}]\\
&\rightarrow &
\partial^q_{s_1, \cdots, s_q} u^\sharp 
- \sum_{j=1}^q(-1)^{j-1}\mathbf{1}_{s_j=\bar{s}}\mathbb{E}[D_{s_j} \partial^{q-1}_{s_1, \cdots,\hat{s}_{j}, \cdots, s_q} (d_{q-1}^*u )^\sharp  |\mathcal{F}_{s_j}].
\end{eqnarray*}
Since the map $u\mapsto S_{q+1}(u)$ is continuous in $L^2$, we see $S_{q+1}(u_j)\rightarrow S_{q+1}(u)$.  As $d_q^*$ is a closed operator, so is $d_q^{*\sharp}$. Therefore, indeed $S_{q+1}(u)\in\dom(d_q^{*\sharp})$ and (\ref{eq:CO_q*}) holds.
\end{proof}
\section{Extension}
\label{sec:extension}
Fang and Franchi \cite{fang1997differentiable} proved that the It\^o map from the classical Wiener space to a path group is a 
differentiable 
isomorphism in the sense of Malliavin.  This allows us to transport our generalised Clark-Ocone formulae to the path group, where they take the same forms as (\ref{eq:CO_q_P_V}) and (\ref{eq:CO_q*_P_V}). 
\par
More precisely, given a compact Lie group $G$ with its bi-invariant metric and its identity element $e$, let $\mathfrak{g}=T_eG$ be the Lie algebra, and $L_g$ and $R_g$ the left and right translations, respectively, by any element of $g\in G$.  We write now $C_0=C_0([0,T]; \mathfrak{g})$ with its Cameron-Martin space $H= L^{2,1}_0([0,T]; \mathfrak{g})$, and denote by $C_e=C_e([0,T]; G)$ the group of continuous paths starting from $e$ with values in $G$.  
The Bismut tangent space 
\[
\mathcal{H}_\sigma = \{TR_{\sigma_t}(h_t): h\in H, t\in[0,T]\} 
\]
is defined for a.e. path $\sigma\in C_e$. 
Let $\{g_t\}_{t\in[0,T]}\subset G$ be the solution, starting at $e$, of the following left-invariant Stratonovich stochastic differential equation (SDE) 
\begin{equation}
\label{eq:sde}
dg_t=TL_{g_t} \circ dB_t, 
\end{equation}
where $B$ is the canonical Brownian motion on $\mathfrak{g}$. 
The It\^o map $\mathcal{I}:C_0\rightarrow C_e$ of the SDE (\ref{eq:sde}) is given by \[
\mathcal{I}(w)_t 
= g_t(w),\quad w\in C_0, t\in [0, T]. 
\]
This is a measure-preserving isomorphism between $(C_0, \mathcal{F}, \gamma)$ and $(C_e, \mathcal{F}^{e}, \mu)$, where the Wiener measure $\mu$ on $C_e$ is the law of $\mathcal{I}$, 
and the natural filtration $\{\mathcal{F}_t^e\}_{t\in[0,T]}$ on $C_e$ is generated by the evaluation map.  
Fang and Franchi \cite{fang1997differentiable} showed that the pull-back $\mathcal{I}^*$ in fact supplies a diffeomorphism between the $H$- and $\mathcal{H}$-differentiable structures of the exterior algebras over $C_0$ and  $C_e$: i.e.,
\[
\mathcal{I}^*d_q = d_q\mathcal{I}^*, \quad \mbox{ and }\quad
\mathcal{I}^*d_{q}^* = d_{q}^*\mathcal{I}^*.
\]
Therefore, differential forms on the path group can be pulled back to those on the Wiener space, where we can apply the generalised Clark-Ocone formulae before transferring them back to the path group.  After modifying Notation \ref{no:partial} by setting 
\[
\partial^q_{r_1, \cdots, r_q} = 
[(TR_{g_{r_1}})\frac{\partial}{\partial r_1 }(TR_{g_{r_1}})^{-1}]\otimes\cdots \otimes 
[(TR_{g_{r_q}})\frac{\partial}{\partial r_q}(TR_{g_{r_q}})^{-1}], 
\]
we can state the following  
\begin{theorem}
The formulae (\ref{eq:CO_q_P_V}) and (\ref{eq:CO_q*_P_V}) hold on $C_e$.
\end{theorem}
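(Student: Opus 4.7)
The plan is to transport the generalised Clark-Ocone formulae from the classical Wiener space $C_0$ to the path group $C_e$ via the Itô map $\mathcal{I}$, leveraging the fact that Fang and Franchi's theorem makes $\mathcal{I}^*$ a differentiable isomorphism of exterior algebras intertwining both $d_q$ and $d_q^*$. Concretely, given a form $\tilde{u}$ in the appropriate domain on $C_e$, I would set $u=\mathcal{I}^*\tilde{u}$ on $C_0$, apply Theorems \ref{th:CO_q} and \ref{th:CO_q*} to $u$ in their $P_{V^{(q)}}$-formulation (\ref{eq:CO_q_P_V})--(\ref{eq:CO_q*_P_V}), and then push everything back to $C_e$ by $(\mathcal{I}^*)^{-1}$. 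The commutation $\mathcal{I}^*d_q=d_q\mathcal{I}^*$ and $\mathcal{I}^*d_q^*=d_q^*\mathcal{I}^*$ immediately transports the outer operators, so the content of the proof reduces to identifying the pull-back of the projections and skew-symmetrisations with their intrinsic counterparts on $C_e$.

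Next I would verify the dictionary between the Cameron-Martin structure on $C_0$ and the Bismut tangent structure on $C_e$. The canonical isometry $H\cong\mathcal{H}_\sigma$ is given path-wise by $h\mapsto TR_{\sigma_\cdot}h$; differentiating an adapted functional along $\mathcal{H}$ and then composing with this isometry is exactly what the revised $\partial^q_{r_1,\dots,r_q}$ in the theorem encodes, via the conjugation by $TR_{g_{r_i}}$. Thus $\mathcal{I}^*$ sends the Malliavin derivative $D_r$ on $C_e$ to the Malliavin derivative on $C_0$, up to the appropriate right translation, and one obtains the formal identity $\mathcal{I}^*\partial^q_{r_1,\dots,r_q}=\partial^q_{r_1,\dots,r_q}\mathcal{I}^*$ in the new notation.

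The key step is to confirm that the conditional-expectation projections $P_{V^{(q)}}$ and $P^j_{V^{(q)}}$ intertwine with $\mathcal{I}^*$. This follows from two facts: first, $\mathcal{I}$ is measure-preserving and maps the Brownian filtration $\{\mathcal{F}_t\}$ bijectively onto $\{\mathcal{F}_t^e\}$, so $\mathbb{E}(\cdot\,|\,\mathcal{F}_t)$ transfers to $\mathbb{E}(\cdot\,|\,\mathcal{F}_t^e)$ under $\mathcal{I}^*$; second, the indicator $\mathbf{1}_{s_j=\bar s}$ and the integration variables are invariant under the map. The Skorohod integral $\delta$ on skew-symmetric tensor fields likewise transports, since on adapted integrands it reduces to the Itô integral and the Itô integrals on both spaces are identified by $\mathcal{I}$. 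Combining these ingredients, applying (\ref{eq:CO_q_P_V}) to $\mathcal{I}^*\tilde{u}$ and pushing forward yields (\ref{eq:CO_q_P_V}) for $\tilde{u}$, and the same argument delivers (\ref{eq:CO_q*_P_V}).

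The main obstacle, and the part that requires care rather than mere citation, is the bookkeeping around the right-translation twists in the modified $\partial^q_{r_1,\dots,r_q}$: one must check that the non-commutativity of $G$ does not introduce extra commutator terms when one differentiates and then takes conditional expectations. This is precisely where Fang and Franchi's differentiability theorem does the heavy lifting, as it guarantees that the pull-back $\mathcal{I}^*$ is compatible with the Hilbert-Schmidt Sobolev calculi on the two sides; modulo this, the proof is a transcription of Theorems \ref{th:CO_q} and \ref{th:CO_q*} through a measure-preserving diffeomorphism.
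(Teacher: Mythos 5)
Your proposal is correct and follows essentially the same route as the paper: pull back a form on $C_e$ by the It\^o map, invoke Fang--Franchi's intertwining $\mathcal{I}^*d_q=d_q\mathcal{I}^*$ and $\mathcal{I}^*d_q^*=d_q^*\mathcal{I}^*$ together with the measure- and filtration-preserving properties of $\mathcal{I}$, apply the formulae on $C_0$, and transfer back using the right-translation-conjugated $\partial^q_{r_1,\dots,r_q}$. The paper in fact leaves this transport argument at the level of the surrounding discussion, so your more explicit verification of the correspondence of projections, filtrations and stochastic integrals is a faithful elaboration rather than a departure.
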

Elworthy and Li \cite{elworthy2007ito} introduced a `no redundant noise'  class of examples of Riemannian path spaces and extended the results of Fang and Franchi \cite{fang1997differentiable} to this class, at least in the case of $q=1$, for which an analogue of the above theorem can be stated.  For the vanishing of harmonic one-forms, both the path group and the `no redundant noise'  class are covered by the more general result for Riemannian path spaces in \cite{elworthy2011vanishing}. 
\bibliographystyle{amsplain}
\bibliography{../../wt}        

\providecommand{\bysame}{\leavevmode\hbox to3em{\hrulefill}\thinspace}
\providecommand{\MR}{\relax\ifhmode\unskip\space\fi MR }
\providecommand{\MRhref}[2]{%
  \href{http://www.ams.org/mathscinet-getitem?mr=#1}{#2}
}
\providecommand{\href}[2]{#2}
\begin{thebibliography}{10}

\bibitem{capitaine1997martingale}
M.~Capitaine, E.P. Hsu, and M.~Ledoux, \emph{{Martingale representation and a
  simple proof of logarithmic Sobolev inequalities on path spaces}}, Electron.
  Comm. Probab \textbf{2} (1997), 71--81.

\bibitem{clark1970representation}
J.M.C. Clark, \emph{{The representation of functionals of Brownian motion by
  stochastic integrals}}, The Annals of Mathematical Statistics (1970),
  1282--1295.

\bibitem{donnelly1981differential}
H.~Donnelly, \emph{{The differential form spectrum of hyperbolic space}},
  Manuscripta Mathematica \textbf{33} (1981), no.~3, 365--385.

\bibitem{elworthy2007ito}
K.D. Elworthy and X.M. Li, \emph{{It{\^o} maps and analysis on path spaces}},
  Mathematische Zeitschrift \textbf{257} (2007), no.~3, 643--706.

\bibitem{elworthy2008l2}
\bysame, \emph{{An $L^2$ theory for differential forms on path spaces I}},
  Journal of Functional Analysis \textbf{254} (2008), no.~1, 196--245.

\bibitem{elworthy2011vanishing}
K.D. Elworthy and Y.~Yang, \emph{{The vanishing of $L^2$ harmonic one-forms on
  based path spaces}}, 2011.

\bibitem{fang1994inegalite}
S.~Fang, \emph{{In{\'e}galit{\'e} du type de Poincar{\'e} sur l'espace des
  chemins riemanniens}}, Comptes rendus de l'Acad{\'e}mie des sciences.
  S{\'e}rie 1, Math{\'e}matique \textbf{318} (1994), no.~3, 257--260.

\bibitem{fang1997differentiable}
S.~Fang and J.~Franchi, \emph{{A differentiable isomorphism between Wiener
  space and path group}}, S{\'e}minaire de Probabilit{\'e}s XXXI (1997),
  54--61.

\bibitem{gross1965abstract}
L.~Gross, \emph{{Abstract Wiener spaces}}, Proc. Fifth Berkeley Symposium on
  Mathematics Statistics and Probability, 1965, pp.~31--42.

\bibitem{gross1967potential}
\bysame, \emph{{Potential theory on Hilbert space}}, Journal of Functional
  Analysis \textbf{1} (1967), no.~2, 123--181.

\bibitem{gross1975logarithmic}
\bysame, \emph{Logarithmic sobolev inequalities}, American Journal of
  Mathematics \textbf{97} (1975), no.~4, 1061--1083.

\bibitem{gross1991logarithmic}
\bysame, \emph{{Logarithmic Sobolev inequalities on loop groups}}, Journal of
  functional analysis \textbf{102} (1991), no.~2, 268--313.

\bibitem{nualart2006malliavin}
D.~Nualart, \emph{{The Malliavin Calculus and Related Topics}}, Springer-Verlag
  Berlin/Heidelberg, 2006.

\bibitem{nualart1988stochastic}
D.~Nualart and E.~Pardoux, \emph{{Stochastic calculus with anticipating
  integrands}}, Probability Theory and Related Fields \textbf{78} (1988),
  no.~4, 535--581.

\bibitem{ocone1984malliavin}
D.~Ocone, \emph{{Malliavin's calculus and stochastic integral representations
  of functionals of diffusion processes}}, Stochastics An International Journal
  of Probability and Stochastic Processes \textbf{12} (1984), no.~3, 161--185.

\bibitem{shigekawa1986rham}
I.~Shigekawa, \emph{{De Rham-Hodge-Kodaira's decomposition on an abstract
  Wiener space}}, J. Math. Kyoto Univ \textbf{26} (1986), no.~2, 191--202.

\bibitem{wu1990traitement}
L.M. Wu, \emph{{Un traitement unifi{\'e} de la repr{\'e}sentation des
  fonctionnelles de Wiener}}, S{\'e}minaire de Probabilit{\'e}s \textbf{24}
  (1990), 166--187.

\bibitem{yang2011ito}
Y.~Yang, \emph{{It\^o-Wiener chaos and the Hodge decomposition on an abstract
  Wiener space}}, 2011.

\end{thebibliography}
\end{document}